\newtheorem{theorem}{Theorem}[section]
\newtheorem{lemma}[theorem]{Lemma}
\newtheorem{proposition}[theorem]{Proposition}
\newtheorem{corollary}[theorem]{Corollary}
\theoremstyle{definition}
\newtheorem{definition}[theorem]{Definition}
\theoremstyle{remark}
\newtheorem{remark}[theorem]{Remark}
\numberwithin{equation}{section}
\numberwithin{figure}{section} %% Comment out for sequentially-numbered
\newcommand{\alias}[2]{
 \providecommand{#1}{}
 \renewcommand{#1}{#2}
}
\newcommand{\ind}[1]{\mathbf{1}_{#1}}
\newcommand{\R}{\mathbb{R}}
\newcommand{\N}{\mathbb{N}}
\newcommand{\Z}{\mathbb{Z}}
\alias{\HNT}{H-non-tangentiel}
\alias{\cone}{sansnom}
\alias{\majN}{\mathcal{N}}
\alias{\majL}{\mathcal{L}}
\alias{\majJ}{\mathcal{J}}
\alias{\majA}{\mathcal{A}}
\alias{\majB}{\mathcal{B}}
\alias{\majC}{\mathcal{C}}
\alias{\majM}{\mathcal{M}}
\alias{\majS}{\mathcal{S}}
\alias{\majF}{\mathcal{F}}
\alias{\majG}{\mathcal{G}}
\alias{\majH}{\mathcal{H}}
\alias{\majD}{\mathcal{D}}
\alias{\majR}{\mathcal{R}}
\alias{\majV}{\mathcal{V}}
\alias{\majW}{\mathcal{W}}
\alias{\mA}{\mathfrak{A}}
\alias{\mB}{\mathfrak{B}}
\alias{\mL}{\mathfrak{L}}
\alias{\mK}{\mathfrak{K}}
\alias{\mD}{\mathfrak{D}}
\alias{\mR}{\mathfrak{R}}
\alias{\mM}{\mathfrak{M}}
\alias{\mN}{\mathfrak{N}}
\alias{\mG}{\mathfrak{G}}
\alias{\mV}{\mathfrak{V}}
\alias{\mW}{\mathfrak{W}}
\alias{\mS}{\mathfrak{S}}
\alias{\mH}{\mathfrak{H}}
\alias{\mh}{\mathfrak{h}}
\alias{\mg}{\mathfrak{g}}
\alias{\Z}{\mathbb{Z}}
\alias{\Q}{\mathbb{Q}}
\alias{\R}{\mathbb{R}}
\alias{\T}{\mathbb{T}}
\alias{\E}{\mathbb{E}}
\alias{\P}{\mathbb{P}}
\alias{\H}{\mathbb{H}}
\alias{\G}{\mathbb{G}}
\alias{\M}{\mathbb{M}}
\alias{\V}{\mathbb{V}}
\alias{\W}{\mathbb{W}}
\alias{\T}{\mathbb{T}}
\alias{\A}{\mathbb{A}}
\alias{\B}{\mathbb{B}}
\alias{\diam}{\mathrm{diam}}
\begin{document}

\title{Boundary behaviour of harmonic functions on hyperbolic manifolds}

\author{Camille PETIT}
\address{University of Jyväskylä\\
 Department of Mathematics and Statistics\\
P.O. Box 35 (MaD)\\
FI-40014 University of Jyväskylä\\
Finland }
\email{camille.c.petit@jyu.fi}

\subjclass[2010]{Primary 31C05; 05C81; Secondary 60J45; 60D05; 60J50}

%\date{July 2, 2009 and, in revised form, December 6}

\keywords{Harmonic functions, Gromov hyperbolic manifolds, Brownian motion, boundary at infinity, Fatou's theorem, non-tangential convergence}

%-------------------------------
%	  ABSTRACT             |
%-------------------------------
\selectlanguage{english}
\begin{abstract}
Let $M$ be a complete simply connected manifold which is in addition Gromov hyperbolic, coercive and roughly starlike. For a given harmonic function on $M$, a local Fatou Theorem and a pointwise criteria of non-tangential convergence coming from the density of energy are shown: at almost all points of the boundary, the harmonic function converges non-tangentially if and only if the supremum of the density of energy is finite.  As an application of these results, a Calder\'on-Stein Theorem is proved, that is, the non-tangential properties of convergence, boundedness and finiteness of energy are equivalent at almost every point of the boundary.

\end{abstract}

\maketitle

\tableofcontents

%--------------------------------------
%        INTRODUCTION            |
%--------------------------------------

\section{Introduction}

%\textbf{Acknowledgments.} The author would like to thank the referees for their fruitful remarks, for pointing out numerous mistakes and Alano Ancona for his enlightenment.

The interplay between the geometry of a complete Riemannian manifold $M$ and the existence of non-constant harmonic functions on $M$ has been studied by researcher in geometric analysis for decades. On one hand, S.T.~Yau \cite{Yau75} proved that, on a complete Riemannian manifold $M$ of non-negative Ricci curvature, every positive harmonic function is constant. On the other hand, in the middle of the eighties, M.T.~Anderson and R.~Schoen \cite{AS85} provided a complete description of the space of non-negative harmonic functions for manifolds of pinched negative curvature. They proved the identification between the sphere at infinity and the Martin boundary, that is, the boundary allowing an integral representation of non-negative harmonic functions by measures on the boundary. Major contributions to this latter issue were given by A.~Ancona in a series of papers \cite{Anc87, Anc88, Anc90} and will be discussed later in this article.

The study of non-tangential convergence of harmonic functions goes back to 1906 with P.~Fatou's seminal paper \cite{Fat06}, where the following result is proved:

{\it  Any positive harmonic function on the unit disc admits  non-tangential limits at almost every point $\theta$ of the boundary circle.} 

Recall that a function is said to \textit{converge non-tangentially at $\theta$} if it has a finite limit at $\theta$ on every non-tangential cone with vertex $\theta$.
Fatou type theorems have been proved in many different contexts since then and in particular on Gromov hyperbolic graphs and manifolds \cite{Anc88, Anc90}.  One of the issues is to replace the global positivity condition by local criteria of non-tangential convergence. Of special interest are two criteria which have been intensively studied: the criterion of non-tangential boundedness \cite{Pri16, Cal50b} and the criterion of finiteness of non-tangential integral area \cite{MZ38, Spe43, Cal50a, Ste61}, also called the criteria of Lusin area. A function is \textit{non-tangentially bounded at a boundary point $\theta$} if it is bounded on every non-tangential cone with vertex $\theta$. Similarly, a function is of \textit{finite non-tangential integral area at $\theta$} if on every non-tangential cone with vertex $\theta$, it has a finite area integral. Calder\' on-Stein's Theorem (\cite{Cal50b,Ste61}) asserts that for a harmonic function in the Euclidean half-space, notions of non-tangential convergence, non-tangential boundedness and finiteness of non-tangential area integral coincide at almost all points of the boundary. These results were reproved by J.~Brossard (\cite{Bro78}) using Brownian motion. J.~Brossard also stated in \cite{Bro88} the criterion of the density of the area integral, a notion first introduced by R.F.~Gundy \cite{Gun83}.

 As noticed by A. Kor\' anyi, hyperbolic spaces provide a convenient framework for studying Calder\'on-Stein like results. A first reason is that there should exist a lot of non-constant harmonic functions under negative curvature assumptions, whereas they are in some sense rare on manifolds satisfying non-negative curvature assumptions. Another reason is that several notions have simpler and more natural expressions in the setting of hyperbolic spaces. When we equip the Euclidean half-space with the hyperbolic Poincar\'e metric, Euclidean notions of non-tangential cone with vertex $\theta$:
 $$\Gamma_\alpha^\theta := \{ (x,y)\in \R^\nu \times \R_+ \, | \, |x-\theta|<ay<a \},$$
 of non-tangential area integral and of density of  area integral:
 $$\int_{\Gamma_\alpha^\theta} | \nabla u(x,y)|^2 y^{1-\nu} dx dy \ \text{ and } \ \frac 12 \int_{\Gamma_\alpha^\theta} y^{1-\nu} \Delta |u-r|(dxdy),$$
 turn out to be respectively tubular neighborhoods of geodesic rays starting at a base point $o$:
 $$\Gamma_c^\theta:= \{ z \, | \, \exists \, \gamma \text{ a geodesic ray from } o \text{ to } \theta \text{ such that } d(z,\gamma)<c \},$$
 a true energy and a density of energy:
 $$J_c^\theta:=\int_{\Gamma_c^\theta} | \nabla u|^2 d\nu \ \text{ and } \ D_c^r(\theta):=-\frac 12\int_{\Gamma_c^\theta} \Delta |u-r |(dx) .$$

Following this philosophy, Calder\'on-Stein's result was extended to the framework of Riemannian manifolds of pinched negative curvature (\cite{Mou95}) and trees (\cite{Mou00, AP08, Mou10, Pic10}). The criterion introduced by J.~Brossard of the density of area integral was also addressed in \cite{Mou07} for Riemannian manifolds of pinched negative curvature. The present paper was motivated by the question whether these kinds of results also hold for Gromov hyperbolic spaces. In a recent paper \cite{Pet12}, we proved the criteria of non-tangential boundedness in the framework of Gromov hyperbolic graphs. The aim of this paper is to deal with the different criteria presented above in the case of Gromov hyperbolic manifolds.

Let us now describe the main results of this paper. We first introduce briefly the geometric setting. The geometric notions will be defined precisely in section \ref{sect:preliminaries}. We say that a Riemannian manifold $M$ is \textit{roughly-starlike} if there exist a constant $K\geq 0$ and a base point $o\in M$ such that every point $x\in M$ is within a distance at most $K$ from a geodesic ray starting at $o$. A Riemannian manifold $M$ of dimension $n$ has \textit{bounded local geometry} provided about each $x\in M$, there is a geodesic ball $B(x,r)$ (with $r$ independent of $x$) and a diffeomorphism $F:B(x,r)\to \R^n$ with
$$\frac 1c \cdot d(y,z)\leq \| F(y)-F(z)\| \leq c \cdot d(y,z)$$
for all $y,z\in B(x,r)$, where $c$ is independent of $x$. It is worth mentioning that $M$ has bounded local geometry if $M$ has Ricci curvature and injectivity radius bounded from below.
Following the terminology of A.~Ancona, $M$ is called \textit{coercive} if it has bounded local geometry and if the bottom $\lambda_1(M)$ of the spectrum is positive. We say that a complete, simply connected Riemannian manifold $M$ satisfies condition ($\clubsuit$) if in addition $M$ is coercive, roughly starlike and Gromov hyperbolic.

We first prove a local Fatou Theorem for Riemannian manifolds satisfying conditions ($\clubsuit$). Let $U$ be an open set in $M$ and let us denote by $\partial M$ the geometric boundary of $M$. We say that a point $\theta\in \partial M$ is \textit{tangential for $U$} if for all $c>0$, the set $\Gamma_c^\theta \setminus U$ is bounded.

\begin{theorem}  \label{thm:local Fatou}
Let $M$ be a manifold satisfying conditions ($\clubsuit$) and let $U$ be an open subset of $M$. If $u$ is a non-negative harmonic function on $U$, then for $\mu$-almost all $\theta$ that is tangential for $U$, the function $u$ converges non-tangentially at $\theta$.
\end{theorem}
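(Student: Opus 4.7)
The plan is to adapt Brossard's probabilistic approach to the present hyperbolic manifold setting. Let $(B_t)_{t \geq 0}$ be the Brownian motion on $M$ started at the base point $o$. Under condition $(\clubsuit)$, the combination of coercivity and Gromov hyperbolicity (via Ancona's theory) guarantees that $B_t$ converges almost surely to a random point $B_\infty \in \partial M$, that the harmonic measure on $\partial M$ is equivalent to the natural boundary measure $\mu$, and that the convergence is non-tangential: almost surely there exists a random constant $c > 0$ with $B_t \in \Gamma_c^{B_\infty}$ for all sufficiently large $t$. I would then introduce the exit time $\tau := \inf\{t \geq 0 : B_t \notin U\}$ and observe that the stopped process $u(B_{t \wedge \tau})$ is a continuous non-negative local martingale, hence converges almost surely by the non-negative supermartingale convergence theorem.

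The key geometric observation is that if $\theta \in \partial M$ is tangential for $U$, then $\tau = \infty$ almost surely on the event $\{B_\infty = \theta\}$. Indeed, tangentiality means that $\Gamma_c^\theta \setminus U$ is bounded for every $c > 0$; combined with the non-tangential convergence of Brownian motion, this forces $B_t \in U$ for all sufficiently large $t$ on $\{B_\infty = \theta\}$. Thus $u(B_t) = u(B_{t \wedge \tau})$ eventually, and we conclude that $u(B_t)$ converges almost surely on $\{B_\infty = \theta\}$. Applying Fubini together with the equivalence of harmonic measure and $\mu$, this yields a measurable function $L$ defined $\mu$-a.e.\ on the tangential set $T$ such that $u(B_t) \to L(B_\infty)$ almost surely on $\{B_\infty \in T\}$.

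The main obstacle will be to upgrade almost sure convergence along Brownian paths into genuine non-tangential convergence along every geodesic cone $\Gamma_c^\theta$. I expect this to rely on a Harnack-type shadowing argument: if $u$ failed to converge non-tangentially at some $\theta$, one could produce a sequence of points in $\Gamma_c^\theta$ along which $u$ oscillates, and by Harnack's inequality (valid under bounded local geometry) the oscillation would propagate to a tube having positive conditional hitting probability given $B_\infty = \theta$, contradicting the almost sure convergence established above. Making this rigorous requires a quantitative lower bound on the harmonic measure of shadow neighborhoods of non-tangential cones, which should follow from Ancona's boundary Harnack principle together with the roughly starlike assumption; this comparison step is the technically most delicate part of the argument.
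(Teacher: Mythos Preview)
Your overall strategy matches the paper's: Brossard's probabilistic method, followed by a Harnack shadowing argument to upgrade stochastic to non-tangential convergence. There is, however, a genuine gap in your ``key geometric observation''. You claim that if $\theta$ is tangential for $U$ then $\tau=\infty$ almost surely on $\{B_\infty=\theta\}$, deducing this from the fact that $B_t\in U$ for all sufficiently large $t$. But $\tau$ is the \emph{first} exit time of $U$: Brownian motion may well leave $U$ and re-enter before settling into $U$ for good. (Take $U=M\setminus\overline{B(p,1)}$ for some $p\neq o$; every $\theta$ is tangential for $U$, yet $\mathbb{P}_o^\theta(\tau<\infty)>0$.) On $\{\tau<\infty\}$ the stopped process $u(B_{t\wedge\tau})$ freezes at $u(B_\tau)$, so its convergence says nothing about $\lim_t u(B_t)$; your assertion ``$u(B_t)=u(B_{t\wedge\tau})$ eventually'' is simply false on that event.

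The correct statement, proved in the paper as Lemma~\ref{lem:proba tau infinite}, is only that $\mathbb{P}_o^\theta(\tau=+\infty)>0$ for $\mu$-almost every tangential $\theta$. This requires an extra step: from ``$B_t\in U$ for large $t$, $\mathbb{P}_o^\theta$-a.s.''\ one applies the strong Markov property at the exit time of a large ball to see that the harmonic function $x\mapsto K(x,\theta)\,\mathbb{P}_x^\theta(\tau=+\infty)$ is not identically zero on $U$, hence strictly positive by the maximum principle. Then $\mathbb{P}_o^\theta\bigl(\tau=\infty\text{ and }u(B_t)\text{ converges}\bigr)>0$, and the asymptotic zero-one law (which you never invoke) upgrades this to $\mathbb{P}_o^\theta$-a.s.\ convergence of $\tilde u(B_t)$, where $\tilde u=u\,\mathbf{1}_U$. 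Your final paragraph on the passage to non-tangential convergence is essentially correct and corresponds to the paper's Lemma~\ref{lem:Brownian motion and NT balls}.
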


The measure $\mu$ on $\partial M$ is the \textit{harmonic measure}.
The proof follows the approach by F.~Mouton \cite{Mou07}. In Mouton's proof, geometry comes in at some key points by use of comparison Theorems in pinched negative curvature. 
As a Corollary of Theorem \ref{thm:local Fatou}, we deduce that a harmonic function converges non-tangentially at almost all points where it is non-tangentially bounded from below (or above).

\begin{corollary}  \label{cor:bnd from below}
Let $M$ be a manifold satisfying conditions ($\clubsuit$) and let $u$ be a harmonic function on $M$. Then, for $\mu$-almost all $\theta\in \partial M$, the following properties are equivalent:
\begin{enumerate}
 \item The function $u$ converges non-tangentially at $\theta$.
 \item The function $u$ is non-tangentially bounded from below at $\theta$.
 \item There exists $c>0$ such that $u$ is bounded from below on $\Gamma_c^\theta$.
\end{enumerate} 
\end{corollary}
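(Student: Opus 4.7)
The implications $(1) \Rightarrow (2) \Rightarrow (3)$ are immediate from the definitions, so the content is in $(3) \Rightarrow (1)$, which I would prove by reducing to an application of Theorem \ref{thm:local Fatou}.

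First, I would exhaust the set of $\theta$ satisfying $(3)$ by a countable family. Let
\[
A_n := \{\theta \in \partial M : u \geq -n \text{ on } \Gamma_n^\theta\}.
\]
A point satisfies $(3)$ if and only if it lies in some $A_n$, so by $\sigma$-additivity of $\mu$ it suffices to prove, for each fixed $n$, that $u$ converges non-tangentially at $\mu$-almost every $\theta \in A_n$.

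Fix $n$. The natural open set to consider is
\[
U_n := \bigcup_{\theta \in A_n} \Gamma_n^\theta,
\]
on which $u + n$ is, by construction, a non-negative harmonic function. Theorem \ref{thm:local Fatou} applied to $u+n$ on $U_n$ would then give non-tangential convergence of $u$ at $\mu$-a.e.\ $\theta$ that is tangential for $U_n$. The remaining task is therefore to show that $\mu$-almost every $\theta \in A_n$ is tangential for $U_n$, that is, that $\Gamma_c^\theta \setminus U_n$ is bounded for every $c > 0$.

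For $c \leq n$ this is immediate because $\Gamma_c^\theta \subset \Gamma_n^\theta \subset U_n$. The main obstacle is $c > n$: one must show that at $\mu$-a.e.\ $\theta \in A_n$, the enlarged cone $\Gamma_c^\theta$ is covered, outside a compact set, by the cones $\Gamma_n^{\theta'}$ issued from nearby boundary points $\theta' \in A_n$. I would do this by a Lebesgue-density argument on $\partial M$: at a density point of $A_n$ for $\mu$ and the visual-metric structure on $\partial M$, the shadows at infinity of the cones $\Gamma_n^{\theta'}$ with $\theta' \in A_n$ close to $\theta$ should asymptotically fill a whole neighborhood of $\theta$ in $\partial M$, including the shadow of $\Gamma_c^\theta$. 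The geometric input here is the comparability between shadows of cones based at $o$ and quasi-balls for the visual metric, together with the doubling/regularity of the harmonic measure $\mu$ arising from Ancona's estimates under condition $(\clubsuit)$; these should be available from the preliminaries and from the tools already used to prove Theorem \ref{thm:local Fatou}. Once tangentiality is in place, Theorem \ref{thm:local Fatou} finishes the argument.
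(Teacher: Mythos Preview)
Your overall strategy matches the paper's: reduce to the local Fatou theorem applied to $u+m$ on the open set $\Gamma_c(A)$, where $A$ is a sublevel set for ``bounded below on a cone,'' and then verify that $\mu$-almost every $\theta\in A$ is tangential for $\Gamma_c(A)$. The difference lies entirely in how the tangentiality step is carried out.

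The paper does \emph{not} use a Lebesgue-density argument on $\partial M$, visual metrics, or doubling of the harmonic measure. Instead it invokes Corollary~\ref{cor:spikes lemma}, which is an immediate consequence of the geometric Lemma~\ref{lem:geometric lemma 1} together with Lemma~\ref{lem:bounded harmonic functions}. The argument is short: the bounded harmonic function $v_A(x)=\mu_x(A)$ converges non-tangentially to $1$ at $\mu$-a.e.\ $\theta\in A$, while Lemma~\ref{lem:geometric lemma 1} gives $v_A\le 1-\eta$ at every point outside $\Gamma_c(A)$ (for $c>c_0$). These two facts are incompatible along any unbounded sequence in $\Gamma_e^\theta\setminus\Gamma_c(A)$, so such a sequence cannot exist and $\theta$ is tangential. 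Note also that the paper fixes one aperture $c>c_0$ from the start (rather than letting the aperture grow with $n$), precisely so that Lemma~\ref{lem:geometric lemma 1} and Corollary~\ref{cor:spikes lemma} apply.

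Your proposed route via density points, shadows, and doubling is plausible in this setting---Ancona's estimates do yield the required regularity of $\mu$---but none of that machinery is set up in the paper, and you leave it as ``should be available.'' Given that Corollary~\ref{cor:spikes lemma} is already in hand, the density argument is an unnecessary detour: the tangentiality you need is a one-line citation.
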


Then we focus on the density of energy. In \cite{Bro88}, J.~Brossard proved that for a harmonic function $u$ on the Euclidean half-space, at almost all points of the boundary, $u$ converges non-tangentially if and only if the supremum over $r\in \R$ of the density of area integral on the level set $\{ u=r\}$ is finite. In \cite{Mou07}, F.~Mouton proved that for a harmonic function $u$ on a Riemannian manifold of pinched negative curvature, $u$ converges non-tangentially at almost all points of the boundary where the density of energy on the level set $\{ u=0\}$ is finite, providing a partial geometric analogue of Brossard's Theorem. We focus here on Gromov hyperbolic manifolds and prove an analogue for the density of energy of Brossard's result. It generalizes and strengthens Mouton's Theorem.

\begin{theorem}  \label{thm:density energy}
Let $M$ be a manifold satisfying conditions ($\clubsuit$), $c>0$, and let $u$ be a harmonic function on $M$. Then, for $\mu$-almost all $\theta\in \partial M$, the following properties are equivalent:
\begin{enumerate}
 \item The function $u$ converges non-tangentially at $\theta$.
 \item $\sup_{r\in \R} D_c^r(\theta)<+\infty$.
 \item $D_c^0(\theta)<+\infty$.
\end{enumerate} 
\end{theorem}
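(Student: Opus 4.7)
The strategy is the cycle (2) $\Rightarrow$ (3) $\Rightarrow$ (1) $\Rightarrow$ (2), where (2) $\Rightarrow$ (3) follows trivially by setting $r=0$.

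For (3) $\Rightarrow$ (1), the plan is to reduce to Corollary~\ref{cor:bnd from below} by showing that $D_c^0(\theta)<+\infty$ forces $u$ to be non-tangentially bounded from below at $\mu$-almost every such $\theta$. Since $u$ is harmonic, $-\Delta|u|$ is a non-negative Radon measure on $M$ supported on the zero set $\{u=0\}$, with total mass $2D_c^0(\theta)$ on $\Gamma_c^\theta$. Decomposing $u = u^+ - u^-$ yields non-negative subharmonic functions $u^\pm$ with $-\Delta u^\pm = \tfrac{1}{2}(-\Delta|u|)$. Following the scheme of~\cite{Mou07}, I would perform a Riesz-type decomposition $u^\pm = h^\pm + G_U\nu^\pm$ on a suitable open set $U$, with $h^\pm \geq 0$ harmonic on $U$ and $\nu^\pm$ a finite positive measure, where $U$ is chosen so that $\theta$ is tangential for $U$. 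Theorem~\ref{thm:local Fatou} then gives non-tangential convergence of $h^\pm$ at $\mu$-a.e.\ such $\theta$, while $G_U\nu^\pm$ converges non-tangentially to $0$ $\mu$-a.e.\ by standard results on Green potentials of finite measures, resting on Ancona's sharp Green function estimates~\cite{Anc87, Anc90} for coercive Gromov hyperbolic manifolds. This shows $u^\pm$ non-tangentially bounded at $\mu$-a.e.\ such $\theta$, hence $u$ non-tangentially bounded from below, and Corollary~\ref{cor:bnd from below} concludes. The main obstacle here is that the naive choice $U=\Gamma_c^\theta$ fails---$\theta$ is \emph{not} tangential for $\Gamma_c^\theta$, since for $c_1>c$ the shell $\Gamma_{c_1}^\theta\setminus\Gamma_c^\theta$ is unbounded---so constructing the required $\theta$-tangential open set $U$ on which $\nu^\pm$ has finite mass is the core geometric step, in which Gromov hyperbolicity and the coercive hypothesis $\lambda_1(M)>0$ must enter.

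For (1) $\Rightarrow$ (2), assume $u$ converges non-tangentially at $\theta$ to a value $\ell$, so that $|u|\leq K$ on some slightly larger cone $\Gamma_{c_1}^\theta$, $c_1>c$. For $|r|>K$ the level set $\{u=r\}$ misses $\Gamma_c^\theta$ and $D_c^r(\theta)=0$; for $|r|\leq K$, I would integrate $-\Delta|u-r|$ against a smooth cutoff $\phi$ with $\phi\equiv 1$ on $\Gamma_c^\theta$ and $\mathrm{supp}\,\phi\subset\Gamma_{c_1}^\theta$, yielding
\[ D_c^r(\theta) \leq \tfrac{1}{2}\int_M |u-r|\,|\Delta\phi|\,dx \leq K\,\|\Delta\phi\|_{L^1}, \]
uniformly in $r$. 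The technical issue is that $\Gamma_{c_1}^\theta$ is unbounded: $\phi$ must be built as a limit of compactly supported cutoffs, with the far-field contribution controlled by the non-tangential convergence $u\to\ell$ together with the exponential decay at infinity of cross-sectional volumes of tubes (a consequence of Gromov hyperbolicity and coercivity). A probabilistic alternative using Brownian motion and Tanaka's formula applied to the martingale $t\mapsto u(X_t)$ at level $r$ could yield both nontrivial implications simultaneously, by identifying $D_c^r(\theta)$ with a conditional expectation of the local time at level $r$ of the martingale, given convergence of $X_t$ to $\theta$.
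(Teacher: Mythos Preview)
Your cyclic strategy matches the paper's, and your probabilistic alternative for $(1)\Rightarrow(2)$ is exactly the route the paper takes; but your primary cutoff argument for that implication contains a real error, and for $(3)\Rightarrow(1)$ you have correctly located the obstacle without resolving it.

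\medskip

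\textbf{On $(3)\Rightarrow(1)$.} Your Riesz-type scheme is close in spirit to the paper's. The paper's key move---the one you flag as ``the core geometric step'' but do not carry out---is to take $U$ to be a \emph{superlevel set of harmonic measure}: set $\mathcal D_c^m=\{\theta:D_c^0(\theta)\le m\}$, $v(x)=\mu_x(\mathcal D_c^m)$, and $U=\{v\ge\alpha\}$. Lemma~\ref{lem:geometric lemma 1} forces $U\subset\Gamma_c(\mathcal D_c^m)$ for suitable $\alpha$, and $\mu$-a.e.\ $\theta\in\mathcal D_c^m$ is tangential for $U$ because $v\to 1$ non-tangentially there. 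Lemma~\ref{lem:comparison Green Poisson} then gives $-\int_U G(o,x)\,\Delta|u|(dx)<\infty$. The paper does not literally write a Riesz decomposition of $u^\pm$; it instead bounds $\sup_n \mathbb E_o[|u(X_{\tau_n})|]$ on an exhaustion of $U$, which (following Brossard) yields $u$ as a difference of two non-negative harmonic functions on $U$, to which Theorem~\ref{thm:local Fatou} applies. Your Riesz route would also work once you use this $U$, so the real gap in your proposal is not the decomposition mechanism but the missing construction of $U$.

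\medskip

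\textbf{On $(1)\Rightarrow(2)$.} Your cutoff argument breaks: there is no ``exponential decay of cross-sectional volumes of tubes''. Under bounded local geometry, balls of fixed radius have volume bounded above and below by uniform constants, so the shell $\Gamma_{c_1}^\theta\setminus\Gamma_c^\theta$ has volume growing linearly along the geodesic and $\|\Delta\phi\|_{L^1}=\infty$ for any cutoff equal to $1$ on all of $\Gamma_c^\theta$. Truncating at radius $R$ gives $\|\Delta\phi_R\|_{L^1}\asymp R$, so the bound $K\|\Delta\phi_R\|_{L^1}$ diverges and yields no uniform control in $r$ as $r\to\ell$. The paper instead runs the probabilistic argument you sketch at the end: on $\Gamma=\Gamma_c(\mathcal N_c^m)$ the stopped martingale $(u(X_{t\wedge\tau}))$ is bounded, Barlow--Yor gives $\mathbb E_o[\sup_r L_\tau^r]<\infty$, and the identity $\mathbb E_o^\theta[L_{\tau_n}^r]=-\int_{\Gamma_n} G_{\Gamma_n}(o,x)K(x,\theta)\,\Delta|u-r|(dx)$ (Lemma~\ref{lem:local time}), combined with Lemma~\ref{lem:comparison Green functions} and Lemma~\ref{lem:proba tau infinite} to pass from $G_\Gamma$ to $G$ and then to a uniform lower bound on $G_\Gamma(o,x)K(x,\theta)$ along $\Gamma_e^\theta$, converts this into $\sup_r D_e^r(\theta)<\infty$ for $\mu$-a.e.\ $\theta\in\mathcal N_c^m$.
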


As a Corollary, we prove the criterion of the finiteness of the non-tangential energy. 

\begin{corollary}  \label{thm:energy}
Let $M$ be a manifold satisfying conditions ($\clubsuit$) and let $u$ be a harmonic function on $M$. Then, for $\mu$-almost all $\theta\in \partial M$, the following properties are equivalent:
\begin{enumerate}
 \item The function $u$ converges non-tangentially at $\theta$.
 \item The function $u$ has finite non-tangential energy at $\theta$.
\end{enumerate} 
\end{corollary}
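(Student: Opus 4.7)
The plan is to relate the non-tangential energy $J_c^\theta$ to the density of energy $D_c^r(\theta)$ via a coarea identity, and then to invoke Theorem~\ref{thm:density energy}. The identity I would establish first is
\[
J_c^\theta \;=\; \int_{\R} D_c^r(\theta)\, dr,
\]
which follows because $\pm\tfrac{1}{2}\Delta|u-r|$ is supported on the level set $\{u=r\}$ with density $|\nabla u|$ against the induced Hausdorff measure $\sigma$, so that the coarea formula for $u$ gives
\[
\int_{\Gamma_c^\theta} |\nabla u|^2\, d\nu \;=\; \int_{\R}\!\int_{\{u=r\}\cap\Gamma_c^\theta} |\nabla u|\, d\sigma\, dr \;=\; \int_{\R} D_c^r(\theta)\, dr.
\]

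For the direction $(1)\Rightarrow(2)$: if $u$ converges non-tangentially at $\theta$ with limit $L(\theta)$, then $u$ is bounded on $\Gamma_c^\theta$ (continuous on its compact part, close to $L(\theta)$ near infinity), so $\{u=r\}\cap\Gamma_c^\theta=\emptyset$ and $D_c^r(\theta)=0$ for all $r$ outside a compact interval. Theorem~\ref{thm:density energy} then gives $\sup_{r\in\R}D_c^r(\theta)<\infty$ at such $\theta$ outside a $\mu$-null set, so the coarea identity integrates a bounded, compactly supported function to yield $J_c^\theta<\infty$. Running this for each $c$ in a countable unbounded set and using monotonicity of $J_c^\theta$ in $c$ gives finiteness of $J_c^\theta$ for every $c>0$.

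For the direction $(2)\Rightarrow(1)$, suppose $J_c^\theta<\infty$ for some $c>0$. Then $D_c^r(\theta)<\infty$ for Lebesgue-a.e. $r\in\R$. For each such $r$, apply Theorem~\ref{thm:density energy} to the harmonic function $u-r$: since $D_c^{0,\,u-r}(\theta)=D_c^{r,\,u}(\theta)$ and non-tangential convergence of $u-r$ at $\theta$ is equivalent to that of $u$, it furnishes a $\mu$-null exceptional set $E_r\subset\partial M$ off which non-tangential convergence of $u$ at $\theta$ is equivalent to the finiteness of $D_c^r(\theta)$. Applying Fubini to $\ind{\{(\theta,r)\,:\,\theta\in E_r\}}$ on $\partial M\times\R$, whose $\theta$-slices have $\mu$-measure zero, shows that for $\mu$-a.e. $\theta$ the set $\{r:\theta\in E_r\}$ has Lebesgue measure zero. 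For such a $\theta$ satisfying $J_c^\theta<\infty$, I then pick some $r$ with both $D_c^r(\theta)<\infty$ and $\theta\notin E_r$; the equivalence at that $r$ yields non-tangential convergence of $u$ at $\theta$.

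The main obstacle is the joint measurability required in the Fubini step of $(2)\Rightarrow(1)$: one must argue that the family $(E_r)_{r\in\R}$ can be chosen so that $\{(\theta,r):\theta\in E_r\}$ is measurable in $\partial M\times\R$. This should fall out of the concrete Brownian-motion or capacity-theoretic construction of $E_r$ used in the proof of Theorem~\ref{thm:density energy}, but requires a careful inspection of that construction. A secondary technicality is the coarea identity of Step~1 on possibly non-smooth level sets of $u$, which is standard via Sard's theorem together with the subharmonicity of $|u-r|$, but which should be written out carefully.
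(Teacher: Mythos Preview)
Your argument is correct, and the coarea identity plus Theorem~\ref{thm:density energy} is exactly the right toolkit; the direction $(1)\Rightarrow(2)$ matches the paper's proof. For $(2)\Rightarrow(1)$, however, the paper takes a shorter path that sidesteps the measurability worry you flag. Rather than working pointwise in $\theta$ and then invoking Fubini on the family $(E_r)_r$ of exceptional sets, the paper first stratifies by the value of the energy: set $\majJ_c^m:=\{\theta:\ J_c^\theta\le m\}$, and observe that
\[
\int_{\R}\int_{\majJ_c^m} D_c^r(\theta)\,d\mu_o(\theta)\,dr
=\int_{\majJ_c^m} J_c^\theta\,d\mu_o(\theta)\le m.
\]
Hence for Lebesgue-a.e.\ $r$ one has $\int_{\majJ_c^m} D_c^r(\theta)\,d\mu_o(\theta)<\infty$; fix \emph{one} such $r$. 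Then $D_c^r(\theta)<\infty$ for $\mu$-a.e.\ $\theta\in\majJ_c^m$, and a single application of Theorem~\ref{thm:density energy} (to $u-r$) gives non-tangential convergence for $\mu$-a.e.\ $\theta\in\majJ_c^m$. A countable union over $m$ finishes.

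The gain is that only the joint measurability of $(r,\theta)\mapsto D_c^r(\theta)$ is needed (which is routine, being an integral against the positive measure $-\Delta|u-r|$), and Theorem~\ref{thm:density energy} is invoked once for a fixed $r$ rather than for all $r$ simultaneously. Your Fubini-on-$E_r$ step can in fact be salvaged by writing $E_r=\{\theta: D_c^r(\theta)<\infty\}\setminus\{\theta:\ u\text{ converges non-tangentially at }\theta\}$, which reduces the measurability of $\{(\theta,r):\theta\in E_r\}$ to that of $D_c^r(\theta)$ again; so your approach is not wrong, just longer than necessary.
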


Corollaries \ref{cor:bnd from below} and \ref{thm:energy} together yield in particular the Calder\' on-Stein Theorem on Gromov hyperbolic manifolds.

This paper is organized as follows. In section \ref{sect:preliminaries}, we present the geometric framework of the results, recalling briefly some properties of Gromov hyperbolic metric spaces, the definition of a roughly starlike manifold and of a coercive manifold. In section \ref{sect:BM and conditioning} we discuss some basics related to Brownian motions needed later on, in particular the martingale property and the Doob's h-process method, allowing to condition Brownian motion to exit the manifold at a fixed point of the boundary. In section \ref{sect:Harnack inequalities}, we recall the different Harnack inequalities needed later on. Section \ref{sect:NT behaviour of Brownian motion}  is devoted to the proofs of several lemmas ensuing Harnack inequalities and crucial in the proofs of the main results. Finally, in section \ref{sect:local Fatou theorem} we prove Theorem \ref{thm:local Fatou} and Corollary \ref{cor:bnd from below} and in section \ref{sect:density of energy}, we prove Theorem \ref{thm:density energy} and Corollary \ref{thm:energy}.

%--------------------------------
%        Preliminaries          |
%--------------------------------

\section{Preliminaries}   \label{sect:preliminaries}

From now on, $M$ denotes a complete simply connected Riemannian manifold of dimension $n\geq 2$ and $d$ denotes the usual Riemannian distance on $M$. Let $\Delta$ denote the Laplace-Beltrami operator on $M$, by $G$ the associated Green function. A function $u:M\to \R$ is called \textit{harmonic} if $\Delta u=0$. The Green function $G$ is finite outside the diagonal, positive, symmetric and for every $y\in M$, the function $x\mapsto G(x,y)$ is harmonic on $M\setminus \{ y\}$. We will make additional geometric assumptions on $M$, which will be described in the following paragraphs. 

\subsection{Gromov hyperbolic spaces}

Gromov hyperbolic spaces have been introduced by M.~Gromov in the 80's (see for instance \cite{Gro81, Gro87}). These spaces are naturally equipped with a geometric boundary. There exists a wide literature on Gromov hyperbolic spaces, see \cite{GdlH90, BH99} for nice introductions. We introduce here only the properties of these spaces which will be used in the following.

Let $(X,d)$ denote a metric space. The \textit{Gromov product} of two points $x,y\in X$ with respect to a basepoint $o\in X$ is defined by
$$(x,y)_o:= \frac 12 \left[ d(o,x)+d(o,y)-d(x,y) \right].$$
Notice that $0\leq (x,y)_o\leq \min \{ d(o,x), d(o,y)\}$ and that if $o'\in X$ is another basepoint, then for every $x,y\in X$,
$$|(x,y)_o-(x,y)_{o'} |\leq d(o,o').$$

\begin{definition}
A metric space $(X,d)$ is called \textit{Gromov hyperbolic} if there exists $\delta\geq 0$ such that for every $x,y,z\in X$ and every basepoint $o\in X$,
\begin{equation}  \label{ineq:hyperbolicity 1}
(x,z)_o\geq \min \{ (x,y)_o, (y,z)_o\} -\delta.
\end{equation}
For a real $\delta\geq 0$, we say that $(X,d)$ is \textit{$\delta$-hyperbolic} if inequality (\ref{ineq:hyperbolicity 1}) holds for all $x,y,z,o\in X$.
\end{definition}

\begin{remark}
From now on, when considering a Gromov hyperbolic metric space, we will always assume, without loss of generality, that inequality (\ref{ineq:hyperbolicity 1}) holds with $\delta$ an integer greater than or equal to 3.
\end{remark}

The definition of Gromov hyperbolicity makes sense in every metric space. When the metric space is Gromov hyperbolic and geodesic, the Gromov product $(x,y)_o$ may be seen as a rough measure of the distance between $o$ and any geodesic segment between $x$ and $y$. More precisely, if $\gamma$ is a geodesic segment between $x$ and $y$, we have
$$d(o,\gamma)-2\delta \leq (x,y)_o \leq d(o,\gamma).$$

We now describe the geometric boundary of a Gromov hyperbolic space. Let $(X,d)$ be a $\delta$-hyperbolic metric space and fix a basepoint $o\in X$. A sequence $(x_i)_i$ in $X$ \textit{converges at infinity} if
$$\lim_{i,j\to +\infty} (x_i,x_j)_o=+\infty.$$
This condition is independent of the choice of the basepoint. Two sequences $(x_i)_i$ and $(y_j)_j$ converging at infinity are called \textit{equivalent} if $\lim_{i\to +\infty} (x_i,y_i)_o=+\infty$. This defines an equivalence relation on sequences converging at infinity. The \textit{geometric boundary} $\partial X$ is the set of equivalence classes of sequences converging at infinity. In order to fix an appropriate topology on $\overline X:= X\cup \partial X$, we extend the Gromov product to the boundary. Let us say that for a point $x\in X$, a sequence $(x_i)\in X^\N$ is in the class of $x$ if $x_i\to x$. We then define
$$(x,y)_o:= \sup \liminf_{i,j\to\infty} (x_i,y_j)_o,$$
where the supremum is taken over all sequences $(x_i)$ in the class of $x\in \overline X$ and $(y_j)$ in the class of $y\in \overline X$.
The inequality
\begin{equation}  \label{ineq:hyperbolicity 2}
(x,z)_o\geq \min \{ (x,y)_o, (y,z)_o \} -2\delta
\end{equation}
holds for every $x,y,z \in \overline X$. If in addition $(X,d)$ is geodesic, then for every $x \in X$, $\xi\in \partial X$ and every geodesic ray $\gamma$ from $o$ to $\xi$, we have
\begin{equation}    \label{ineq:Gromov product and geodesics}
d(x,\gamma)-2\delta \leq (o,\xi)_x \leq d(x,\gamma)+2\delta.
\end{equation}
 For a real $r\geq 0$ and a point $\xi\in\partial X$, denote $V_r(\xi):=\{ y\in \overline X \, | \, (\xi,y)_o\geq r\}$. We then equip $\overline X$ with the unique topology containing open sets of $X$ and admitting the sets $V_r(\xi)$ with $r\in \Q_+$ as a neighborhood base at any $\xi\in \partial X$. This provides a compactification $\overline X$ of $X$.

\subsection{Roughly starlike manifolds}

We will assume the manifold $M$ to be roughly starlike. From now on, fix a basepoint $o\in M$.

\begin{definition}
A complete Riemannian manifold $M$ is called \textit{roughly starlike with respect to the basepoint $o\in M$} if there exists $K\geq 0$ such that for every point $x\in M$, there exists a geodesic ray $\gamma$ starting at $o$ and within a distance at most $K$ from $x$.
\end{definition}

We will abbreviate to roughly starlike if there is no risk of ambiguity.
Let us notice that if $M$ is $\delta$-hyperbolic and $K$-roughly starlike with respect to $o$, then $M$ is $K'$-roughly starlike with respect to $o'$, with $K'=K'(d(o,o'),\delta,K)$. The "roughly starlike" assumption has previously been used by A.~Ancona \cite{Anc88} and by M.~Bonk, J.~Heinonen and P.~Koskela \cite{BHK01}.

Recall that a complete manifold $M$ is said to have a \textit{quasi-pole} in a compact set $\Omega\subset M$ if there exists $C>0$ such that each point of $M$ lies in a $C$-neighborhood of some geodesic ray emanating from $\Omega$. If $M$ is roughly starlike with respect to $o$, then $M$ has a quasi-pole at $o$.

\subsection{Coercive manifolds}

As explained in the introduction, a manifold $M$ of dimension $n$ has \textit{bounded local geometry} provided about each $x\in M$, there is a geodesic ball $B(x,r)$ (with $r$ independent of $x$) and a diffeomorphism $F:B(x,r)\to \R^n$ with
$$\frac 1c d(y,z)\leq \| F(y)-F(z)\| \leq cd(y,z)$$
for all $y,z\in B(x,r)$, where $c$ is independent of $x$.

The manifold $M$ is \textit{coercive} if it has bounded local geometry and if the bottom $\lambda_1(M)$ of the spectrum is positive. Recall that the bottom of the spectrum of the Laplacian $\Delta$ is defined by
$$\lambda_1(M):=\inf_\phi \frac{\int_M \| \nabla \phi \|^2}{\int_M \phi^2},$$
where $\phi$ ranges over all smooth functions with compact support on $M$. Notice that for a manifold of bounded local geometry, $\lambda_1(M)>0$ if and only if its Cheeger constant is positive (see \cite{Bus82}).

It is worth mentioning that in general, Gromov hyperbolicity does not imply positivity of the bottom of the spectrum. In \cite{Cao00}, J.~Cao gave conditions for a Gromov hyperbolic, roughly starlike manifold with bounded local geometry to have positive bottom of the spectrum.

\subsection{Comments}   \label{subsect:comments}

Recall that we say that a complete, simply connected Riemannian manifold $M$ satisfies condition ($\clubsuit$) if in addition $M$ is coercive, roughly starlike and Gromov hyperbolic.
On one hand, when the manifold $M$ is Gromov hyperbolic, we can consider its geometric boundary as defined above. On the other hand, we can also consider its Martin boundary, which is natural when dealing with non-negative harmonic functions. In \cite{Anc90}, A.~Ancona proved that for a manifold satisfying conditions ($\clubsuit$) (and even without roughly starlike assumption), the geometric compactification and the Martin compactification are homeomorphic.

The following Proposition is a consequence of conditions ($\clubsuit$). It yields a uniformity in the behaviour of the Green function $G$ and will be useful in the following.

\begin{proposition}[\cite{Anc90}, page 92]   \label{prop:exponential decay}
If $M$ satisfies conditions ($\clubsuit$), there exist two positive constants $C_1=C_1(M)$ and $c_1=c_1(M)$ such that for every $x,y\in M$ with $d(x,y)\geq 1$, we have
\begin{equation*}
G(x,y)\leq C_1 \exp(-c_1d(x,y)).
\end{equation*}
\end{proposition}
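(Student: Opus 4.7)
The plan is to combine the spectral gap $\lambda_1(M)>0$ with the local regularity coming from bounded local geometry by writing the Green function as a time integral of the heat kernel $p_t$ on $M$, namely $G(x,y)=\int_0^\infty p_t(x,y)\,dt$. I would split this integral at a scale $t=a\,d(x,y)$ for a parameter $a>0$ to be optimized, handling each piece with a different estimate.

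For the short-time piece $0<t\leq a\,d(x,y)$, I would invoke a Gaussian upper bound of the form $p_t(x,y)\leq C\,t^{-n/2}\exp(-d(x,y)^2/(C't))$, which holds on manifolds of bounded local geometry (by Davies--Gaffney energy estimates together with a local volume doubling, or by a Li--Yau-type argument using the uniform local Ricci bound implicit in the bi-Lipschitz chart hypothesis). On this range the Gaussian factor dominates, and after integration one gets a contribution of order $\exp(-d(x,y)/(aC'))$ up to polynomial loss.

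For the long-time piece $t\geq a\,d(x,y)$, coercivity provides spectral decay: since the $L^2\to L^2$ operator norm of $e^{t\Delta}$ is bounded by $e^{-\lambda_1 t}$, the semigroup identity $p_{2t}(x,y)=\int p_t(x,z)p_t(z,y)\,dz$ combined with Cauchy--Schwarz and the uniform on-diagonal bound $p_t(x,x)\leq C$ for $t\geq 1$ (which follows from the short-time Gaussian bound and the monotonicity of $t\mapsto p_t(x,x)$) yields $p_t(x,y)\leq C\,e^{-\lambda_1 t}$. Integrating from $a\,d(x,y)$ to infinity gives a contribution of order $e^{-\lambda_1 a\,d(x,y)}$. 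Choosing $a$ to balance $1/(aC')$ against $\lambda_1 a$ produces the advertised estimate with explicit constants $C_1$ and $c_1$.

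The main obstacle I expect is precisely the short-time Gaussian upper bound: extracting it cleanly from the bare definition of bounded local geometry requires either a local Li--Yau gradient estimate or a volume-doubling comparison in uniform balls, both standard but not automatic from the definition. A more hands-on alternative that avoids heat kernels altogether is a Harnack-chain / barrier argument: coercivity allows one to construct a positive function $\phi$ on the complement of a small ball around $y$, satisfying $-\Delta\phi\geq c\,\phi$ together with the decay $\phi(z)\leq C\exp(-c_1 d(z,y))$, after which the minimum principle applied to the ratio $G(\cdot,y)/\phi$ on nested annuli yields the bound directly. Either route genuinely uses both coercivity (for the exponential rate) and the local regularity of $G$ (to anchor the decay once $x$ is at unit distance from the pole $y$).
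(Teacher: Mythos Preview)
The paper does not give its own proof of this proposition; it is quoted directly from Ancona \cite{Anc90}. Your heat-kernel argument is a valid and well-known route to this estimate, and it is genuinely different from Ancona's original proof, which is purely potential-theoretic and is essentially the second alternative you sketch: coercivity provides a barrier showing that the Dirichlet Green function on the complement of a unit ball decays by a fixed factor across each unit annulus, bounded local geometry gives a uniform upper bound on $G$ at unit distance from the pole, and iterating along a geodesic from $y$ to $x$ yields the exponential decay. Neither argument uses Gromov hyperbolicity or the roughly starlike hypothesis, so the standing assumption ($\clubsuit$) is stronger than necessary here. Your heat-kernel route cleanly separates the local regularity (Gaussian bound for small $t$) from the spectral gap (exponential $L^2$ decay for large $t$) and generalizes easily to other operators; Ancona's barrier argument is shorter once the uniform local Harnack estimates are in place and stays entirely within the potential-theoretic framework that the rest of his boundary theory uses. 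Your identification of the short-time Gaussian bound as the one nontrivial input is accurate: under the bi-Lipschitz chart hypothesis the Laplace--Beltrami operator is uniformly elliptic in local coordinates, so Aronson-type estimates give the required local Gaussian upper bound, and Davies--Gaffney plus the uniform on-diagonal bound handle the rest.
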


%-------------------------------------------------------------
%        Conditioning and Brownian motion          |
%-------------------------------------------------------------

\section{Brownian motion and conditioning}   \label{sect:BM and conditioning}

Following the philosophy of J.~Brossard \cite{Bro88}, our methods use Brownian motion and the connection between harmonic functions and Brownian motion given by the martingale property. We describe here the "Brownian material" needed in the proofs.

\subsection{Brownian motion}

The \textit{Brownian motion} $(X_t)$ on $M$ is defined as the diffusion process associated with the Laplace-Beltrami operator $\Delta$. If $M$ satisfies condition~($\clubsuit$), Brownian motion is defined for every $t\in \R_+$ (\cite{Anc90}, page 60). Choosing $\Omega:=\mathcal C(\R_+,M)$ as the probability space, for every $t\in \R_+$, $X_t$ is a random variable on $\Omega$, with values in $M$, and for every $\omega \in \Omega$, $t\mapsto X_t(\omega)$ is a continuous function, that is a path in $M$. If we consider Brownian motion starting at a fixed point $x\in M$, we obtain a probability $\mathbb P_x$ on $\Omega$. 

An important property of the Martin boundary (which in our case coincides with the geometric boundary, see section \ref{subsect:comments}) is that for $\mathbb P_x$-almost every trajectory $\omega \in \Omega$, there exists a boundary point $\theta\in \partial M$ such that $\lim_{t\to +\infty} X_t(\omega)=\theta$. Let us denote by $X_\infty(\omega)$ the $\partial M$-valued random variable such that Brownian motion converges $\mathbb P_x$-almost surely to $X_\infty$ for all $x\in M$. The \textit{harmonic measure at $x$}, denoted by $\mu_x$, is the distribution of $X_\infty$ when Brownian motion starts at $x$. All the measures $\mu_x, x\in M$ on $\partial M$ are equivalent. This gives rise to a notion of $\mu$-negligibility. Defining the \textit{Poisson kernel} $K(x,\theta)$ as limit of the Green kernels $\lim_{y\to \theta} \frac{G(x,y)}{G(o,y)}$, the Radon-Nikodym derivative of harmonic measure is given by
$$K(x,\theta)=(d\mu_x /d\mu_o)(\theta).$$

The \textit{martingale property} (see \cite{Dur84}) is a crucial tool in our methods: for a function $f$ of class $C^2$, 
$$f(X_t)+\frac 12 \int_0^t \Delta f(X_s)ds$$
is a local martingale with respect to probabilities $(\mathbb P_x)_x$. Hence if $u$ is harmonic, $(u(X_t))$ is a local martingale.

\subsection{Conditioning}

As claimed above, Brownian motion converges almost surely to a boundary point. \textit{Doob's h-process method} \cite{Doo57} allows to condition Brownian motion to "exit" the manifold at a fixed point $\theta\in\partial M$. For every $x\in M$, we obtain a new probability $\mathbb P_x^\theta$ on $\Omega$, whose support is contained in the set of trajectories starting at $x$ and converging to $\theta$ (see \cite{Bro78, Mou94}). This probability satisfies a \textit{strong Markov property} and an \textit{asymptotic zero-one law}. For all $N\in \N$, denote by $\tau_N$ the exit time of the ball $B(o,N)$ and by $\mathcal{F_{\tau_N}}$ the associated $\sigma$-algebra. Let $\mathcal F_\infty$ be the $\sigma$-algebra generated by $\mathcal F_{\tau_N}, N\in \N$.
We can reconstruct the probability $\mathbb P_x$ with the conditioned probabilities: for a $\mathcal F_\infty$-measurable random variable $F$,

\begin{equation}   \label{eq:conditioning}
\mathbb E_x[F]=\int_{\partial M} \mathbb E_x^\theta[F] d\mu_x(\theta).
\end{equation}

\subsection{Stochastic convergence}

The behaviour of a harmonic function along trajectories of Brownian motion is easily studied by means of martingale theorems.
For a function $f$ on $M$, let us define the following event:
$$\majL_f^{**}:=\{\omega\in\Omega \, | \, \lim_{t\to\infty} f(X_t(\omega)) \text{ exists and is finite} \};$$
The asymptotic zero-one law implies that the quantity $\mathbb{P}_x^\theta(\majL_f^{**})$ does not depend on $x$ and has value $0$ or $1$. In the second case, we say that $f$ \textit{converges stochastically at $\theta$}. In the same way we say that $f$ is \textit{stochastically bounded at $\theta$} if $\mathbb P_o^\theta$-a.s., $f(X_t)$ is bounded, and that $f$ is of \textit{finite stochastic energy at $\theta$} if $\mathbb P_o^\theta$-a.s., $\int_0^{+\infty} | \nabla f(X_t(\omega))|^2 dt<+\infty$. By the martingale property and martingale theorems, F.~Mouton \cite{Mou95} proved that for a harmonic function, the set of points $\theta\in\partial M$ where there is respectively stochastic convergence, stochastic boundedness and finiteness of stochastic energy, are $\mu$-almost equivalent, that is they differ by a set of $\mu$-measure zero.

When the harmonic function $u$ is bounded, non-tangential and stochastic convergences at $\mu$-almost all points of the boundary are automatic (\cite{Anc90}):

\begin{lemma}	\label{lem:bounded harmonic functions}
A bounded harmonic function $u$ on $M$ converges non-tangentially and stochastically at $\mu$-almost all points $\theta\in\partial M$ and the unique function $f\in L^\infty(\partial M,\mu)$ such that 
$$u(x)=\int_{\partial M} f(\theta )d\mu_x (\theta )=\mathbb{E}_x[ f(X_{\infty})]$$
is $\mu$-a.e. the non-tangential and stochastic limit of $u$.
\end{lemma}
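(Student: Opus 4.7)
The plan is to split the lemma into a probabilistic part (stochastic convergence plus integral representation), which follows from a soft martingale argument, and a geometric part (non-tangential convergence), which rests on Ancona's boundary theory.

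First, since $u$ is bounded and harmonic, the martingale property recalled in Section~\ref{sect:BM and conditioning} shows that $(u(X_t))_{t\geq 0}$ is a bounded martingale under each $\mathbb{P}_x$. Doob's convergence theorem therefore produces a bounded $\mathcal{F}_\infty$-measurable random variable $L$ to which $u(X_t)$ converges $\mathbb{P}_x$-almost surely and in $L^1$. Applying the disintegration formula \eqref{eq:conditioning} to the event that this limit exists, Fubini yields $\mathbb{P}_x^\theta$-almost sure convergence of $u(X_t)$ for $\mu_x$-a.e.~$\theta$. The asymptotic zero-one law for $\mathbb{P}_o^\theta$ then forces this limit to be $\mathbb{P}_o^\theta$-almost surely constant, and I call this constant $f(\theta)$. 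By construction $f$ is defined $\mu$-a.e., $u$ converges stochastically to $f$ at $\mu$-a.e.~$\theta$, and $L = f(X_\infty)$ $\mathbb{P}_x$-a.s.

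Next, combining the $L^1$ convergence of $u(X_t)$ with the martingale identity $\mathbb{E}_x[u(X_t)] = u(x)$ yields
\[
u(x) = \mathbb{E}_x[L] = \mathbb{E}_x[f(X_\infty)] = \int_{\partial M} f(\theta) \, d\mu_x(\theta),
\]
and $\|f\|_\infty \leq \|u\|_\infty$ gives $f \in L^\infty(\partial M,\mu)$. For uniqueness, any other $g\in L^\infty$ representing $u$ satisfies $u(X_t) = \mathbb{E}_{X_t}[g(X_\infty)] = \mathbb{E}_x[g(X_\infty)\mid \mathcal{F}_t]$ by the Markov property, so $u(X_t)\to g(X_\infty)$ in $L^1(\mathbb{P}_x)$; comparing with $u(X_t)\to f(X_\infty)$ forces $f = g$ $\mu$-a.e.

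Finally, for the non-tangential convergence I would invoke the homeomorphism between the Martin compactification and the geometric compactification recalled in Section~\ref{subsect:comments} together with Ancona's boundary Harnack principle from~\cite{Anc90}. For continuous boundary data $f$, the Poisson integral solves the Dirichlet problem at infinity and extends continuously to $\overline M$, so non-tangential convergence is automatic at every $\theta\in\partial M$. For general $f\in L^\infty$, I would approximate $f$ in $L^1(\mu)$ by continuous $f_k$ and control the non-tangential maximal function of the Poisson integral of $f-f_k$ by a Hardy--Littlewood-type maximal operator of $|f-f_k|$ against $\mu$. The main obstacle — and the reason the authors cite~\cite{Anc90} instead of reproving the fact — is establishing the weak-type $(1,1)$ bound for that maximal operator on $(\partial M,\mu)$: one needs the comparability $\mu_x \asymp K(x,\cdot)\mu_o$, the doubling property of $\mu$ on shadows of geodesic rays, and a Vitali-type covering argument, all of which lean on the full strength of hypothesis ($\clubsuit$).
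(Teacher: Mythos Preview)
The paper does not actually prove this lemma: it is stated immediately after the sentence ``When the harmonic function $u$ is bounded, non-tangential and stochastic convergences at $\mu$-almost all points of the boundary are automatic (\cite{Anc90})'' and no argument is given. So there is nothing to compare against beyond the citation to Ancona.

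Your sketch is correct and is essentially the argument one would reconstruct from \cite{Anc90}. The probabilistic half (bounded martingale, Doob convergence, disintegration via \eqref{eq:conditioning}, the zero-one law to get a deterministic $f(\theta)$, and the Markov-property uniqueness argument) is clean and complete. For the non-tangential half you correctly isolate the real content: solvability of the Dirichlet problem at infinity for continuous data, then an $L^1$ approximation controlled by a weak-type maximal inequality for Poisson integrals on $(\partial M,\mu)$, which in turn rests on Ancona's boundary Harnack machinery and the doubling of $\mu$ on shadows. You are also right that this last step is exactly why the paper defers to \cite{Anc90} rather than reproving it. In short, your write-up supplies the details the paper omits, and follows the same route Ancona takes.
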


%------------------------------------------
%        Harnack inequalities          |
%------------------------------------------

\section{Harnack inequalities}    \label{sect:Harnack inequalities}

We will use comparison theorems between non-negative harmonic functions several times. The first one is the usual \textit{Harnack inequality on balls}, sometimes called uniform Harnack (see \cite{Anc90} page 21 and \cite{CY75}):

\begin{theorem}[Harnack on balls]     \label{thm:Harnack on balls}
Let $r>0$ and $R>0$ such that $r<R$. There exists a constant $C>0$ such that for all points $x\in M$ and all non-negative harmonic functions $u$ on $B(x,R)$, we have
$$\sup_{y\in B(x,r)} u(y) \leq C\cdot \inf_{y\in B(x,r)} u(y).$$
\end{theorem}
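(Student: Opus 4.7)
The plan is to exploit the bounded local geometry hypothesis to reduce the theorem to the classical Euclidean Harnack inequality for uniformly elliptic operators in divergence form, and then to run a Harnack chain argument to pass from the local-geometry scale up to the prescribed radius $r$. A curvature-based gradient estimate (such as Cheng-Yau's) is \emph{not} required here, since the hypothesis on $M$ is purely bi-Lipschitz.

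First, let $r_0>0$ and $c\geq 1$ be the radius and bi-Lipschitz constant furnished by the bounded local geometry assumption. For each $x\in M$, the diffeomorphism $F_x\colon B(x,r_0)\to \R^n$ pulls back the Riemannian metric to a measurable symmetric tensor $(g_{ij})$ on $F_x(B(x,r_0))\subset \R^n$ whose eigenvalues lie in $[c^{-2},c^2]$ uniformly in $x$. In these coordinates the equation $\Delta u=0$ takes the divergence form $\partial_i\bigl(a^{ij}(y)\,\partial_j u\bigr)=0$, where $a^{ij}:=\sqrt{\det g}\,g^{ij}$ is bounded, measurable, and uniformly elliptic with constants depending only on $c$ and $n$. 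The De Giorgi-Nash-Moser theorem then yields a Harnack constant $C_0=C_0(c,n)$ for such equations on Euclidean balls; transferring this estimate back through $F_x$ absorbs the bi-Lipschitz distortion and produces a small-scale Harnack inequality on $M$: there exist $\rho_0>0$ and $C_1>0$, depending only on $r_0$ and $c$, such that every non-negative harmonic function $v$ on a ball $B(y,\rho_0)\subset M$ satisfies $\sup_{B(y,\rho_0/2)} v \leq C_1 \inf_{B(y,\rho_0/2)} v$.

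To reach the prescribed pair $r<R$, I would run a standard Harnack chain argument. Set $\rho:=\tfrac12\min(\rho_0,R-r)$. Given $y,z\in B(x,r)$, connect them by a curve of length at most $2r$ (for instance $y\to x\to z$ along minimising geodesics) and extract at most $N:=\lceil 4r/\rho\rceil+1$ points spaced $\rho/2$ apart. By the choice of $\rho$, the balls $B(\cdot,\rho)$ around these points are simultaneously small enough to sit inside a bounded-local-geometry chart and far enough from $\partial B(x,R)$ to lie in $B(x,R)$; the small-scale Harnack inequality therefore applies at each step, and iterating gives $u(y)\leq C_1^N u(z)$. Taking the supremum in $y$ and the infimum in $z$ yields the claim with $C:=C_1^N$. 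The main obstacle is the uniformity of $N$ in $x$: the two competing constraints $\rho\leq \rho_0$ and $\rho\leq R-r$ depend only on $r,R,r_0,c$, so $N$ is truly independent of the basepoint $x$, and this is exactly where the uniformity across basepoints built into the bounded local geometry assumption is indispensable.
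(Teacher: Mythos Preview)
The paper does not actually prove this statement; it is quoted as a known fact with references to Ancona and Cheng--Yau. Your argument is correct and is the standard route under the bi-Lipschitz formulation of bounded local geometry adopted here: the charts push the metric to a uniformly elliptic divergence-form operator on a Euclidean ball, De Giorgi--Nash--Moser gives a uniform small-scale Harnack constant, and a chain of overlapping balls (whose length depends only on $r$, $R$, and the local-geometry parameters) upgrades this to the stated inequality. Your observation that a Cheng--Yau gradient estimate is unnecessary is apt: the paper's hypothesis gives only bi-Lipschitz control, not a Ricci lower bound, so the Moser approach is the right one.

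One small point to make explicit: in the chain step you apply Harnack on balls of radius $\rho\le \rho_0/2$ rather than $\rho_0$ itself, so you are implicitly using that the Moser Harnack constant is scale-invariant below the chart radius. This is of course true, but saying it removes any ambiguity about why $C_1$ does not degenerate when $R-r$ is small.
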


The \textit{Harnack principle at infinity} is a key principle of potential theory in hyperbolic geometry. It was established by A.~Ancona (\cite{Anc87}) in a very general framework using the concept of $\phi$-chains. It can be stated simply in the Gromov hyperbolic framework:

\begin{theorem}[Submultiplicativity of the Green function]   \label{thm:submultiplicativity Green function}
There exists a constant $C>0$ such that for all pairs of points $(x,z)\in M^2$ and all points $y\in M$ on a geodesic segment between $x$ and $z$ with $\min \{ d(x,y),d(y,z)\} \geq 1$, the Green function $G$ satisfies
$$C^{-1} \cdot G(x,y)G(y,z)\leq G(x,z)\leq C \cdot G(x,y)G(y,z).$$
\end{theorem}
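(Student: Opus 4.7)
The plan is to use the two workhorses of potential theory available here: the uniform Harnack inequality on balls (Theorem~\ref{thm:Harnack on balls}, furnished by bounded local geometry) and the exponential decay of the Green function (Proposition~\ref{prop:exponential decay}), together with Gromov hyperbolicity itself. The lower and upper bounds have very different flavours, and I would treat them separately.

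For the lower bound $G(x,z) \geq C^{-1} G(x,y) G(y,z)$, I would argue via Brownian motion. Set $B := B(y, 1/2)$ and let $\tau$ be the first hitting time of $\overline B$ by Brownian motion started at $x$ (well-defined since $d(x,y) \geq 1$ keeps $x \notin \overline B$). The function $w \mapsto G(w,z)$ is non-negative harmonic on a neighborhood of $\overline B$ (since $d(y,z) \geq 1$), so Harnack on balls gives $G(w,z) \geq c_1 G(y,z)$ on $\overline B$. The standard potential-theoretic decomposition on the domain $M \setminus \overline B$ yields
\[
G(x,z) \;\geq\; \mathbb{E}_x\!\left[ G(X_{\tau}, z);\, \tau < \infty \right] \;\geq\; c_1\, G(y,z)\, \mathbb{P}_x(\tau < \infty).
\]
Applying the same decomposition to $G(x,y)$ and using that $G(\cdot, y)$ is bounded above on $\partial B$ by local geometry gives $\mathbb{P}_x(\tau < \infty) \geq c_2\, G(x,y)$, and the lower bound follows.

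For the upper bound $G(x,z) \leq C\, G(x,y) G(y,z)$, I would compare the two non-negative harmonic functions $u(w) := G(w,z)$ and $v(w) := G(y,z)\, G(w, y)$ on $\Omega := M \setminus \overline{B(y,1)}$. On $\partial B(y,1)$, Harnack on balls gives $u(w) \leq C_3\, G(y,z)$, while bounded local geometry provides $G(w,y) \geq c_4 > 0$, hence
\[
u(w) \;\leq\; (C_3/c_4)\, v(w) \qquad \text{on } \partial B(y,1).
\]
Both sides vanish at infinity by Proposition~\ref{prop:exponential decay}, so \emph{if} one could propagate this boundary inequality into $\Omega$ with a universal constant, evaluating at $w = x \in \Omega$ would give the estimate.

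The main obstacle is exactly this propagation. A direct maximum principle on $\Omega$ fails: $u$ has a pole at $z \in \Omega$ while $v$ does not, so $u/v$ blows up near $z$ and no single multiplicative constant can work throughout $\Omega$. Ancona's resolution in \cite{Anc87} is to iterate the local Harnack inequality along a $\phi$-chain — a sequence of overlapping balls of bounded radius covering a geodesic from $y$ to $z$ — transporting the comparison one step at a time with a bounded multiplicative factor per step. Here Gromov hyperbolicity is decisive: it guarantees that the $\phi$-chain can be chosen inside a tubular neighborhood of the geodesic with cardinality controlled linearly by $d(y,z)$, and, coupled with the exponential decay of Proposition~\ref{prop:exponential decay}, the accumulated multiplicative factors collapse to a universal constant. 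Specializing the general $\phi$-chain theorem of \cite{Anc87} to the manifold setting of \cite{Anc90} yields the stated submultiplicativity.
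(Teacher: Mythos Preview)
The paper does not prove this theorem; it is stated as a known result due to Ancona \cite{Anc87,Anc90} and simply quoted. So there is no ``paper's own proof'' to compare against --- both you and the paper ultimately defer to Ancona. In that sense your proposal is strictly more informative than the paper.

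Your lower bound argument is standard and correct; this direction is indeed soft and requires only Harnack on balls plus bounded local geometry, no hyperbolicity. Your identification of the upper bound as the genuinely hard direction, and your pointing to Ancona's $\phi$-chain machinery, is also correct.

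However, your heuristic explanation of \emph{why} the $\phi$-chain argument works is misleading and would not survive if someone tried to make it rigorous. You describe the mechanism as: iterate local Harnack along overlapping balls, pick up a multiplicative factor at each step, and then let the exponential decay of $G$ absorb the accumulated factor. That is not what happens. Iterating ordinary interior Harnack along $N \asymp d(y,z)$ balls produces a factor $C^N$; the exponential decay of $G$ gives $e^{-c\,d(y,z)}$; these only cancel if $\log C \le c$, which one has no reason to expect and which is generally false. The actual engine is a \emph{boundary Harnack principle}: at each link of the $\phi$-chain (which, in the paper's formulation as in Theorem~\ref{thm:Harnack at infinity}, is a nested sequence of half-space-like domains $U_i^\gamma$, not overlapping balls) one compares two positive harmonic functions that \emph{both vanish on the same boundary piece}, and the ratio is controlled by a single universal constant with no accumulation. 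Hyperbolicity enters not to control the number of steps but to ensure that the nested domains $U_i^\gamma$ form a $\phi$-chain in Ancona's sense --- in particular, that they separate the endpoints uniformly. Since you end by citing Ancona anyway, this is a defect of exposition rather than of logic, but the paragraph as written would give a reader the wrong picture of where the difficulty lies.
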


We will also need another formulation of this principle. Let $\gamma$ be a geodesic ray starting at $z\in M$ and denote
$$a_i^\gamma := \gamma(4i\delta),\ i\in \N\setminus \{ 0\}$$
$$\text{and } U_i^\gamma := \{ x\in M \, | \, (x,a_i^\gamma)_z > 4i\delta-2\delta\}.$$
Let us point out that for all $i$, $a_i^\gamma \in U_i^\gamma \setminus U_{i+1}^\gamma$ (see figure \ref{fig:Harnack at infinity}).
\begin{figure}
\begin{center}
\includegraphics[width=4.5cm,height=5cm]{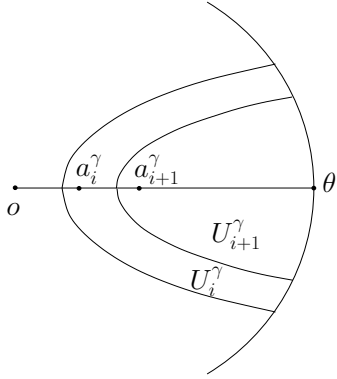}
\caption{Sets $U_i^\gamma$ and points $a_i^\gamma$}
\label{fig:Harnack at infinity}
\end{center}
\end{figure}
Let us also notice that the decreasing sequence of sets $(U_i^\gamma)$ and the sequence of points $(a_i^\gamma)$ provide a $\phi$-chain in the sense of A.~Ancona (\cite{Anc90}, page 93). Then, the Harnack principle at infinity can be stated as follow:

\begin{theorem}[\cite{Anc88}, page 12]  \label{thm:Harnack at infinity}
There exists a constant $C>0$ such that for all $\theta\in \partial M$ and for all geodesic rays $\gamma$ from $o$ to $\theta$, the following properties are satisfied:
\begin{enumerate}
 \item If $u$ and $v$ are two non-negative harmonic functions on $U_i^\gamma$, $v$ does not vanish and $u$ "vanishes" at $\overline{U_i^\gamma} \cap \partial M$, then
 $$\forall x\in U_{i+1}^\gamma, \ \frac{u(x)}{v(x)} \leq C \frac{u(a_{i+1}^\gamma)}{v(a_{i+1}^\gamma)}.$$
 \item If $u$ and $v$ are two non-negative harmonic functions on $M\setminus U_{i+1}^\gamma$, $v$ does not vanish and $u$ "vanishes" at $\partial M \setminus \overline{U_{i+1}^\gamma}$, then
 $$\forall x\in U_i^\gamma, \ \frac{u(x)}{v(x)} \leq C \frac{u(a_i^\gamma)}{v(a_i^\gamma)}.$$
\end{enumerate}
\end{theorem}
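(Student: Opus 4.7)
My plan is to recognize this as an instance of Ancona's general boundary Harnack principle for $\phi$-chains, the role of this theorem being essentially to specialize his abstract framework to the specific chain $(U_i^\gamma, a_i^\gamma)$ provided by Gromov hyperbolicity. The first step is therefore geometric: verify that the pairs $(U_i^\gamma, a_i^\gamma)_i$ do form a $\phi$-chain. Using the hyperbolicity inequality (\ref{ineq:hyperbolicity 2}) and the comparison (\ref{ineq:Gromov product and geodesics}) between Gromov products and distances to geodesic rays, one checks that the sets $U_i^\gamma$ form a strictly decreasing sequence of open neighborhoods of $\theta$, that $a_i^\gamma$ lies in $U_i^\gamma \setminus U_{i+1}^\gamma$ at bounded distance from the "throat" separating $U_{i+1}^\gamma$ from $M\setminus U_i^\gamma$, and that consecutive base points $a_i^\gamma, a_{i+1}^\gamma$ are separated by a definite (uniform in $i$, $\gamma$, $\theta$) distance depending only on $\delta$. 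This is the step where the $4\delta$ spacing in the definition of $a_i^\gamma$ is essential.

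Once this $\phi$-chain structure is in place, the comparison (1) is obtained by the classical two-step mechanism. First, combine the submultiplicativity of the Green function (Theorem \ref{thm:submultiplicativity Green function}) with the exponential decay (Proposition \ref{prop:exponential decay}) and the ball-Harnack inequality (Theorem \ref{thm:Harnack on balls}) to show that for a fixed reference point $z_0 \notin U_i^\gamma$, the Green function $G(\cdot, z_0)$ is itself a positive harmonic function on $U_i^\gamma$ vanishing on $\overline{U_i^\gamma} \cap \partial M$, and any other such function $u$ satisfies two-sided bounds of the form
$$C^{-1}\, u(a_{i+1}^\gamma)\, G(x,z_0)/G(a_{i+1}^\gamma,z_0) \;\leq\; u(x) \;\leq\; C\, u(a_{i+1}^\gamma)\, G(x,z_0)/G(a_{i+1}^\gamma,z_0)$$
on $U_{i+1}^\gamma$. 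Apply the same estimate to $v$ and take the quotient: the Green-function factors cancel, leaving the asserted inequality with a universal constant. The second statement (2) is proved by applying the same argument to the complementary chain $(M \setminus U_{i+1}^\gamma, a_i^\gamma)$, whose $\phi$-chain property follows by the symmetric reasoning; the harmonic functions now vanish on the far side of the boundary and Green comparisons propagate inward from infinity rather than outward.

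The main obstacle is securing uniformity of the constant $C$ over every $\theta \in \partial M$, every geodesic ray $\gamma$, and every index $i$. This uniformity is the whole point of the statement and is not a formal consequence of the local Harnack principle; it rests on the self-similar geometric picture provided by Gromov hyperbolicity together with the bounded-local-geometry hypothesis, which makes the successive pairs $(U_i^\gamma, U_{i+1}^\gamma)$ appear identical at every scale, and on the exponential decay of $G$, which in turn encodes the positivity of $\lambda_1(M)$. In other words, the technical heart of the argument is where the full strength of condition ($\clubsuit$) is consumed, and this is precisely what Ancona's work in \cite{Anc87,Anc88} packages into the boundary Harnack principle we invoke here.
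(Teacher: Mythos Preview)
Your approach matches the paper's exactly: the paper does not prove this theorem but simply observes (just before the statement) that $(U_i^\gamma, a_i^\gamma)_i$ form a $\phi$-chain in Ancona's sense and then cites \cite{Anc88}; your proposal does precisely this, with the added bonus of sketching Ancona's mechanism.

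One imprecision in that sketch is worth flagging. The two-sided Green comparison you write for $u$ relies on $u$ vanishing at $\overline{U_i^\gamma}\cap\partial M$, so ``apply the same estimate to $v$'' is not literally available, since no vanishing is assumed for $v$. What the quotient argument actually needs is only the \emph{lower} bound
\[
v(x)\;\geq\; C^{-1}\, v(a_{i+1}^\gamma)\, \frac{G(x,z_0)}{G(a_{i+1}^\gamma,z_0)}\qquad (x\in U_{i+1}^\gamma),
\]
and this holds for any positive harmonic function on $U_i^\gamma$ by a direct superharmonic comparison with the Green kernel (or a Harnack chain argument through the ``throat''), with no boundary hypothesis on $v$. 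With that correction your outline is sound and coincides with what the paper invokes.
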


%----------------------------------------------------------
%        NT behaviour of Brownian motion         |
%----------------------------------------------------------

\section{Non-tangential behaviour of Brownian motion}   \label{sect:NT behaviour of Brownian motion}

In this section, we gather several lemmas ensuing Harnack inequalities. They provide key ingredients in the proofs of the main results of the paper.

\subsection{A geometric Lemma}

The following geometric lemma is one of the main tools in the ensuing proofs and in particular in the proof of Theorem \ref{thm:local Fatou}. In \cite{Mou94}, it is achieved by use of comparison theorems in pinched negative curvature. For a borelian set $E\subset \partial M$ and a real $c>0$, denote
$$\Gamma_c(E):=\bigcup_{\theta\in E} \Gamma_c^\theta.$$

\begin{lemma}  \label{lem:geometric lemma 1}
There exist $\eta>0$ and $c_0>0$ such that for all borelian sets $E\subset \partial M$ and all $c>c_0$, one has
\begin{equation*}
\forall x\not\in \Gamma_c(E), \ \mu_x(E)\leq 1-\eta.
\end{equation*}
\end{lemma}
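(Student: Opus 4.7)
The plan is to produce, for each $x \notin \Gamma_c(E)$, a boundary neighborhood $V_r(\theta_x) \subset \partial M$ of the ``direction'' $\theta_x$ of $x$ seen from $o$, with two properties: $V_r(\theta_x) \cap E = \emptyset$, and $\mu_x(V_r(\theta_x)) \geq \eta$ for a constant $\eta > 0$ independent of $x$, $c$ and $E$. The conclusion $\mu_x(E) \leq 1 - \eta$ follows immediately. To set this up, observe first that $o$ lies on every geodesic ray issuing from $o$, so $d(x, \gamma_\theta) \leq d(o,x)$ for any $\theta \in \partial M$; in particular, $x \notin \Gamma_c(E)$ with $E$ non-empty forces $d(o,x) \geq c$. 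Assuming $c_0 > K$, the rough-starlike hypothesis then yields a geodesic ray $\gamma$ from $o$ and a point $a = \gamma(t_0)$ with $d(x,a) \leq K$ and $t_0 \geq c - K$; set $\theta_x := \gamma(\infty)$.

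For the geometric step, fix $\theta \in E$ and a geodesic ray $\gamma_\theta$ from $o$ to $\theta$. A standard consequence of $\delta$-hyperbolicity (fellow-traveling of two geodesic rays sharing a basepoint) is that $d(\gamma(t), \gamma_\theta(t))$ stays bounded by a constant $\kappa(\delta)$ for $t$ up to $T := (\theta,\theta_x)_o$. Combined with $d(x,a) \leq K$, this gives $d(x,\gamma_\theta(\min(t_0,T))) \leq K + \kappa(\delta) + (t_0 - T)_+$, which together with the hypothesis $d(x,\gamma_\theta) > c$ yields
$$(\theta, \theta_x)_o \;\leq\; t_0 - c + C_1$$
for a constant $C_1 = C_1(\delta, K)$. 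Setting $r := t_0 - c + C_1 + 1$, we obtain $E \cap V_r(\theta_x) = \emptyset$, hence $\mu_x(E) \leq 1 - \mu_x(V_r(\theta_x))$.

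The analytic step relies on the following uniform shadow estimate: there exist constants $c_2 = c_2(M) > 0$ and $\eta_0 > 0$ such that, for every geodesic ray $\gamma$ starting at $o$ and every $t \geq c_2$,
$$\mu_{\gamma(t)}\bigl(V_{t - c_2}(\gamma(\infty))\bigr) \;\geq\; \eta_0.$$
Choosing $c_0 := c_2 + C_1 + 1$ ensures $r \leq t_0 - c_2$, so $V_{t_0 - c_2}(\theta_x) \subset V_r(\theta_x)$ and $\mu_a(V_r(\theta_x)) \geq \eta_0$. A single application of Harnack on the ball $B(a, K+1)$ (Theorem \ref{thm:Harnack on balls}) then transfers the bound from $a$ to $x$, yielding $\mu_x(V_r(\theta_x)) \geq \eta_0/C_H =: \eta$, where $C_H$ is the Harnack constant.

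The main obstacle is the uniform shadow estimate. In the pinched negative curvature setting of \cite{Mou94} it follows from explicit comparison geometry, which is unavailable in the Gromov hyperbolic framework. Here I would instead extract it from Ancona's boundary Harnack principle (Theorem \ref{thm:Harnack at infinity}), combined with submultiplicativity (Theorem \ref{thm:submultiplicativity Green function}) and exponential decay (Proposition \ref{prop:exponential decay}) of the Green function: these give two-sided estimates $K(\gamma(t), \theta) \asymp G(o, \gamma(t))^{-1}$ on the shadow of $\gamma(t)$ and $\mu_o(V_{t - c_2}(\gamma(\infty))) \asymp G(o, \gamma(t))$, from which the desired lower bound follows via the Poisson kernel identity $d\mu_y/d\mu_o = K(y, \cdot)$.
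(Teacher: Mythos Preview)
Your argument is correct and follows the same overall strategy as the paper: for each $x\notin\Gamma_c(E)$, exhibit a boundary neighborhood of the ``direction of $x$'' that misses $E$ and carries $\mu_x$-mass at least $\eta$. The difference is in how this neighborhood is parametrized. You use the $o$-based shadow $V_r(\theta_x)=\{\xi:(\xi,\theta_x)_o\geq r\}$ with a radius $r\approx d(o,x)-c$ that moves with $x$; the paper uses instead the $x$-based set $A_{x,\alpha}^\theta=\{\xi:(\xi,\theta)_x\geq\alpha\}$ with a \emph{fixed} aperture $\alpha=K+5\delta$. The geometric step is then a two-line hyperbolicity computation: $(o,\theta)_x\leq K+2\delta$ together with $(o,\xi)_x\geq c-2\delta$ for $\xi\in E$ forces $(\xi,\theta)_x\leq K+4\delta$, so $A_{x,K+5\delta}^\theta\cap E=\emptyset$.

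This choice pays off in the analytic step. With $\alpha$ fixed, the paper reduces everything to the single uniform estimate $\mu_x(A_{x,\alpha}^\theta)\geq\eta$ for all $x,\theta$ (Lemma~\ref{lem:geometric lemma 2}), proved by walking along a ray from $x$ toward $\theta$: Lemma~\ref{lem:geometric Lemma 3} shows $d\mu_y/d\mu_x\leq C_1\,G(x,y)$ on the complement of $A_{x,\alpha}^\theta$, so that complement has $\mu_y$-mass $<\tfrac12$ once $d(x,y)$ is large, and one Harnack brings this back to $x$. Your ``uniform shadow estimate'' $\mu_{\gamma(t)}(V_{t-c_2}(\gamma(\infty)))\geq\eta_0$ is in fact an immediate corollary, via the inclusion $A_{\gamma(t),\alpha}^{\gamma(\infty)}\subset V_{t-c_2}(\gamma(\infty))$ valid for $c_2$ large in terms of $\alpha,\delta$. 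The alternative route you sketch --- through two-sided estimates $K(\gamma(t),\cdot)\asymp G(o,\gamma(t))^{-1}$ on the shadow and $\mu_o(V_{t-c_2})\asymp G(o,\gamma(t))$ --- is correct in principle but circuitous: the lower bound $\mu_o(V_{t-c_2})\gtrsim G(o,\gamma(t))$ is essentially equivalent in depth to the shadow estimate itself, so you have not shortcut the work, only relocated it.
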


\begin{figure}
\begin{center}
\includegraphics[width=4.5cm,height=5cm]{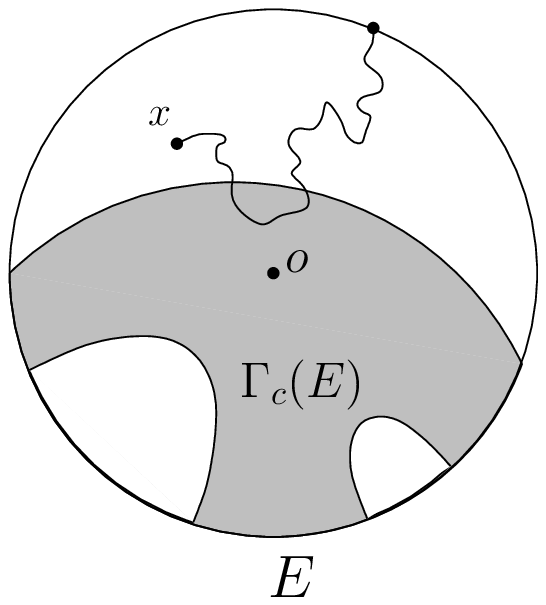}
\caption{Geometric Lemma \ref{lem:geometric lemma 1}}
\label{fig:geometric lemma 1}
\end{center}
\end{figure}
Figure \ref{fig:geometric lemma 1} illustrates Lemma \ref{lem:geometric lemma 1}. The proof follows \cite{Pet12}. We decompose it in two technical lemmas.

For a point $x\in M$, a point $\theta\in \partial M$ and a real $\alpha>0$, denote
$$A_{x,\alpha}^{\theta}= \{ \xi\in \partial M | (\xi,\theta)_x \geq \alpha\}.$$

\begin{lemma}    \label{lem:geometric Lemma 3}
There exist two constants $C_1>0$ and $d_1>0$ depending only on $\alpha$ and $\delta$ such that for all $\xi \in \partial M\setminus A_{x,\alpha}^\theta$ and all points $y$ on a geodesic ray from $x$ to $\theta$ with $d(x,y)\geq d_1$,
$$\frac{d\mu_y}{d\mu_x}(\xi)\leq C_1 \cdot G(y,x).$$
\end{lemma}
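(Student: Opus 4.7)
\emph{Approach.} The plan is to rewrite the Radon--Nikodym derivative in terms of Green functions and reduce to a pointwise Green function estimate. Recalling from the text that $K(x,\xi)=\lim_{z\to\xi}G(x,z)/G(o,z)$ and $d\mu_x/d\mu_o=K(x,\cdot)$, one has
\[
\frac{d\mu_y}{d\mu_x}(\xi)\;=\;\frac{K(y,\xi)}{K(x,\xi)}\;=\;\lim_{z\to\xi}\frac{G(y,z)}{G(x,z)}.
\]
Consequently, it is enough to exhibit a constant $C_1=C_1(\alpha,\delta)$ such that
\[
G(y,z)\;\le\;C_1\cdot G(y,x)\cdot G(x,z)
\]
for every $z\in M$ sufficiently close to $\xi$, and then divide by $G(x,z)$ and pass to the limit.

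\emph{Geometric step.} The core geometric fact I would establish is that, for $d(x,y)\ge d_1$ large and $z$ close enough to $\xi$, the basepoint $x$ lies at uniformly bounded distance from every geodesic segment joining $y$ and $z$. Quantitatively, $(y,z)_x\le \alpha+4\delta$. Since $y$ sits on a geodesic ray from $x$ to $\theta$, one has $(y,\theta)_x=d(x,y)\ge d_1$. Because $(\xi,\theta)_x<\alpha$ while $(\xi,z)_x\to+\infty$ as $z\to\xi$, inequality~(\ref{ineq:hyperbolicity 2}) applied to the triple $(\xi,z,\theta)$ forces $(z,\theta)_x\le \alpha+2\delta$ for $z$ close enough to $\xi$. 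A second application of~(\ref{ineq:hyperbolicity 2}), this time to $(y,z,\theta)$, gives $(z,\theta)_x\ge \min\{(y,z)_x,\,d(x,y)\}-2\delta$; taking $d_1>\alpha+4\delta$ rules out the case where the minimum equals $d(x,y)$, leaving the desired bound $(y,z)_x\le(z,\theta)_x+2\delta\le \alpha+4\delta$.

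\emph{Potential-theoretic step.} Let $w$ be a point on a geodesic from $y$ to $z$ closest to $x$; by~(\ref{ineq:Gromov product and geodesics}) one has $d(x,w)\le \alpha+6\delta=:R_0$. I then fix $d_1:=R_0+2$ (compatibly with the previous choice) so that $y\notin B(x,R_0+1)$, and assume $z$ so close to $\xi$ that also $z\notin B(x,R_0+1)$. Theorem~\ref{thm:Harnack on balls}, applied on $B(x,R_0+1)$ to the non-negative harmonic functions $G(y,\cdot)$ and $G(\cdot,z)$, yields
\[
G(y,w)\le C'\cdot G(y,x)\qquad\text{and}\qquad G(w,z)\le C'\cdot G(x,z),
\]
with $C'=C'(\alpha,\delta)$. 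For $d(x,y)$ and $d(x,z)$ large enough one also has $d(y,w),\,d(w,z)\ge 1$, so Theorem~\ref{thm:submultiplicativity Green function} at the point $w$ on the geodesic from $y$ to $z$ gives $G(y,z)\le C\cdot G(y,w)\cdot G(w,z)$. Combining the three estimates produces the required inequality, and passing to the limit $z\to\xi$ concludes.

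\emph{Main obstacle.} The only delicate point is the hyperbolic geometry step: one must arrange the two applications of~(\ref{ineq:hyperbolicity 2}) so that the large quantity $d(x,y)$ is played against the small constants $\alpha$ and $\delta$ in the correct direction to yield a bound on $(y,z)_x$. Once this is done, the rest is a bookkeeping argument ensuring that uniform Harnack and submultiplicativity apply on balls and geodesic subsegments of uniformly bounded size, all controlled only by $\alpha$ and $\delta$.
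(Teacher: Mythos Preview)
Your proof is correct and follows the same overall strategy as the paper: reduce to the estimate $G(y,z)\le C_1\,G(y,x)\,G(x,z)$ for $z$ close to $\xi$, obtain it by showing that $x$ lies within a bounded distance (depending only on $\alpha,\delta$) of any geodesic from $y$ to $z$, and then apply Theorem~\ref{thm:submultiplicativity Green function} together with the uniform Harnack inequality (Theorem~\ref{thm:Harnack on balls}).

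The execution of the geometric step differs. The paper fixes a geodesic ray $\gamma$ from $x$ to $\xi$, works with the $\phi$-chain sets $U_i^\gamma$ and points $a_i^\gamma$, shows that $y\notin \overline{U_i^\gamma}$ while $z\in\overline{U_{i+1}^\gamma}$, and then cites a hyperbolic-geometry exercise to conclude that $a_i^\gamma$ (hence $x$) is within $50\delta$ of a geodesic from $y$ to $z$. Your version bypasses the $\phi$-chain apparatus entirely: two direct applications of inequality~(\ref{ineq:hyperbolicity 2}) to the triples $(\xi,z,\theta)$ and $(y,z,\theta)$ yield $(y,z)_x\le\alpha+4\delta$, which translates immediately into the desired distance bound. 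This is more elementary and gives a slightly better constant. One cosmetic point: the estimate $d(x,[y,z])\le(y,z)_x+2\delta$ you invoke is the segment version stated in the paper just before~(\ref{ineq:Gromov product and geodesics}), not~(\ref{ineq:Gromov product and geodesics}) itself, which is for geodesic rays to boundary points.
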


\begin{proof}
Let $\xi \in \partial M\setminus A_{x,\alpha}^\theta$. Denote by $\gamma$ a geodesic ray from $x$ to $\xi$. Choose $i$ such that $d(x,a_i^\gamma)-3\delta=4i\delta-3\delta>\alpha+4\delta$. By the hyperbolicity inequality (\ref{ineq:hyperbolicity 2}),
$$\alpha > (\xi,\theta)_x \geq \min \{ (\xi,y)_x , (y,\theta)_x \}-2\delta$$
and if $y$ lies on a geodesic ray from $x$ to $\theta$, there exists $d_2$ depending only on $\alpha$ such that $d(x,y)\geq d_2$ implies $(y,\theta)_x > \alpha+2\delta$. Thus for such a point $y$, $(\xi,y)_x\leq \alpha+2\delta$. Using once again the hyperbolicity inequality,
\begin{equation}   \label{ineq:geometric lemma 2}
\alpha+2\delta \geq (\xi,y)_x \geq \min \{ (\xi,a_i^\gamma)_x,(a_i^\gamma,y)_x \} -2\delta.
\end{equation}
We have $\xi\in \overline{U_i^\gamma}$, thus $(\xi,a_i^\gamma)_x \geq d(x,a_i^\gamma)-3\delta > \alpha+4\delta$. Combining with inequality (\ref{ineq:geometric lemma 2}), we obtain $(a_i^\gamma,y)_x\leq \alpha+4\delta$ and thus $y\not\in \overline{U_i^\gamma}$. 

\begin{figure}
\begin{center}
\includegraphics[width=4.5cm,height=5cm]{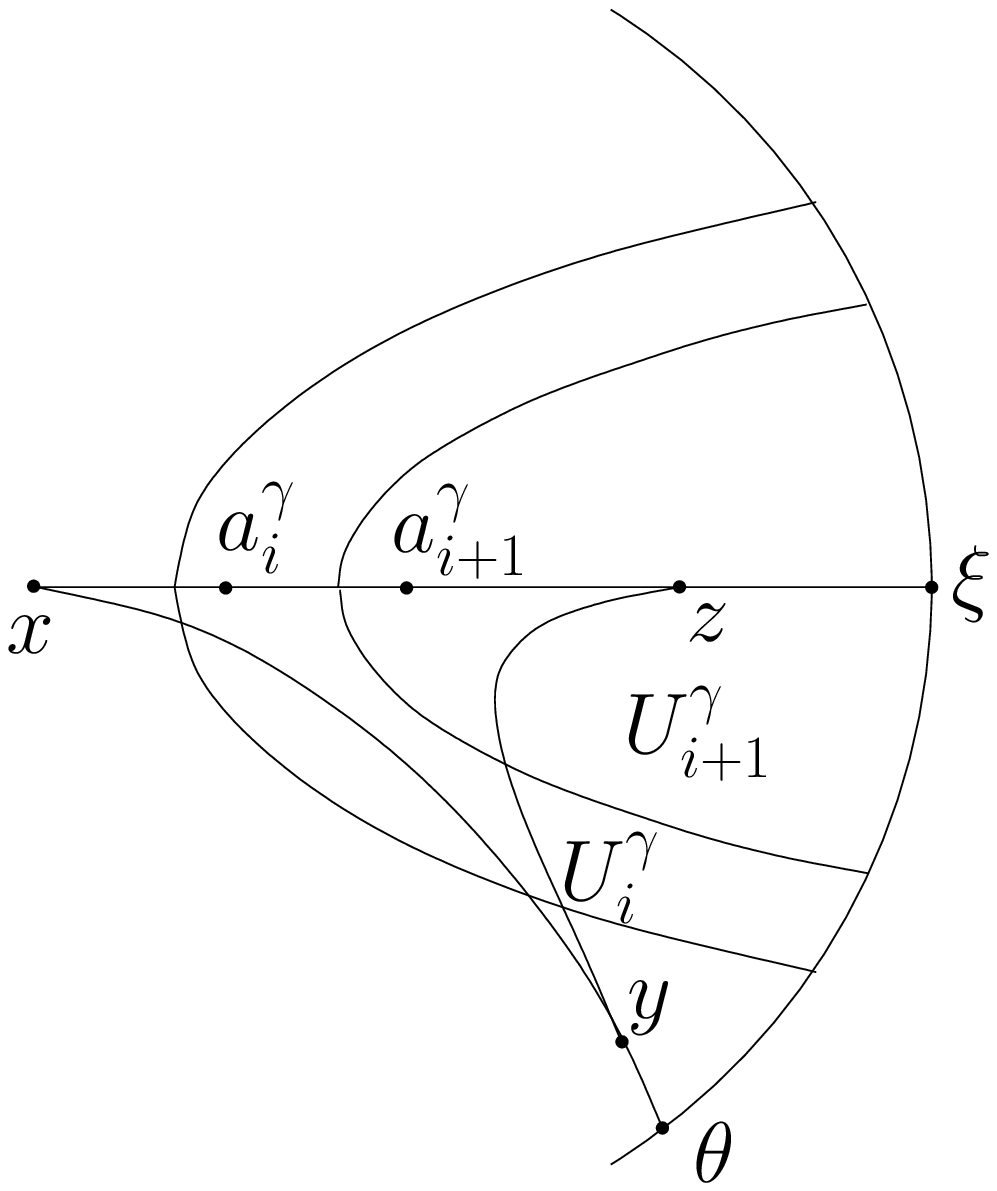}
\caption{Proof of Lemma \ref{lem:geometric Lemma 3}}
\label{fig:proof lemma 5.2}
\end{center}
\end{figure}

Let $z$ be a point of $\gamma$ in $\overline{U_{i+1}^\gamma}$. It is an exercice using hyperbolicity (see \cite{Anc90} page 85 for details) to verify that the distance between $a_i^\gamma$ and a geodesic segment between $y$ and $z$ is at most $50\delta$ (see figure \ref{fig:proof lemma 5.2}). Since $i$ is fixed, the distance between $x$ and a geodesic segment between $y$ and $z$ is bounded from above by a constant depending only on $\delta$. Thus, the submultiplicativity of the Green function on geodesic segments (Theorem \ref{thm:submultiplicativity Green function}) associated with the Harnack inequality on balls (Theorem \ref{thm:Harnack on balls}) give a constant $C_1$ depending only on $\delta$ such that $G(y,z)\leq C_1\cdot G(y,x)G(x,z)$. Since we have
$$\frac{d\mu_y}{d\mu_x}(\xi)=\lim_{z\to \xi} \frac{G(y,z)}{G(x,z)},$$
letting $z\to \xi$, $z\in \overline{U_{i+1}^\gamma}$, we obtain
$$\frac{d\mu_y}{d\mu_x}(\xi)\leq C_1 \cdot G(x,y).$$
\end{proof}

\begin{lemma}   \label{lem:geometric lemma 2}
Given $\alpha>0$, there exists a constant $\eta>0$ such that for all points $x\in M$ and all $\theta\in \partial M$,
\begin{equation*}
\mu_x\left( A_{x,\alpha}^\theta \right) \geq \eta.
\end{equation*}
\end{lemma}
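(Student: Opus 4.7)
The plan is to reduce the estimate at $x$ to a much easier estimate at a point $y$ pushed far along a geodesic ray toward $\theta$, where Brownian motion is overwhelmingly likely to exit into the shadow $A_{x,\alpha}^\theta$, and then transfer the conclusion back to $x$ via a Harnack chain. Concretely, I will fix a geodesic ray $\gamma$ from $x$ to $\theta$, set $y = \gamma(R)$ for a distance $R$ to be chosen depending only on $\alpha$ and the geometric data of $M$, and exploit the fact that Lemma~\ref{lem:geometric Lemma 3} already gives a pointwise control on the Radon--Nikodym derivative $d\mu_y/d\mu_x$ outside $A_{x,\alpha}^\theta$.

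For the choice of $R$, I will apply Proposition~\ref{prop:exponential decay} so that $G(x,y) \leq C e^{-c R}$, and pick $R \geq d_1$ large enough (depending only on $\alpha$, $\delta$, and intrinsic constants of $M$) that $C_1\, G(x,y) \leq 1/2$, where $C_1, d_1$ are the constants supplied by Lemma~\ref{lem:geometric Lemma 3}. Integrating the pointwise bound of that lemma over the complement then gives
$$\mu_y\bigl((A_{x,\alpha}^\theta)^c\bigr) = \int_{(A_{x,\alpha}^\theta)^c} \frac{d\mu_y}{d\mu_x} \, d\mu_x \leq C_1\, G(x,y) \cdot \mu_x\bigl((A_{x,\alpha}^\theta)^c\bigr) \leq \tfrac{1}{2},$$
so that $\mu_y(A_{x,\alpha}^\theta) \geq \tfrac{1}{2}$.

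To transfer this to $\mu_x$, I will use that for each $\xi \in \partial M$ the Martin kernel $K(\cdot, \xi)$ is a positive harmonic function on $M$ and that $d\mu_x/d\mu_y(\xi) = K(x,\xi)/K(y,\xi)$. Chaining the uniform Harnack inequality on balls (Theorem~\ref{thm:Harnack on balls}) along a geodesic from $x$ to $y$ of length $R$ produces a constant $C_H = C_H(R)$ such that $K(x,\xi) \geq C_H^{-1} K(y,\xi)$ uniformly in $\xi \in \partial M$, and hence
$$\mu_x(A_{x,\alpha}^\theta) = \int_{A_{x,\alpha}^\theta} \frac{K(x,\xi)}{K(y,\xi)} \, d\mu_y(\xi) \geq C_H^{-1} \mu_y(A_{x,\alpha}^\theta) \geq \frac{1}{2 C_H} =: \eta.$$

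The main obstacle --- really the only place demanding care --- is bookkeeping the dependence of constants so that $\eta$ depends on $\alpha$ (and the intrinsic geometry of $M$) alone, not on $x$ or $\theta$. This works because Harnack on balls is uniform on $M$ thanks to bounded local geometry, the decay rate in Proposition~\ref{prop:exponential decay} is universal, and the constants $C_1, d_1$ of Lemma~\ref{lem:geometric Lemma 3} depend only on $\alpha$ and $\delta$; consequently the chosen $R$, and then $C_H(R)$, are functions of these data alone.
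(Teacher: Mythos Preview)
Your proof is correct and follows essentially the same route as the paper: push along a geodesic ray to a point $y$ at distance $R=R(\alpha,\delta)$ chosen via Lemma~\ref{lem:geometric Lemma 3} and Proposition~\ref{prop:exponential decay} so that $\mu_y\bigl((A_{x,\alpha}^\theta)^c\bigr)\le C_1G(x,y)\le\tfrac12$, then pull back to $x$ by Harnack. The only cosmetic difference is that the paper applies Theorem~\ref{thm:Harnack on balls} directly to the harmonic function $z\mapsto\mu_z(A_{x,\alpha}^\theta)$, whereas you unpack this via the Poisson kernel and Harnack-chain $K(\cdot,\xi)$; both are equivalent.
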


\begin{proof}
Fix $\alpha>0$. We first prove that there exists $d=d(\alpha)>0$ such that for all $x\in M$, all $\theta\in \partial M$ and all points $y$ on a geodesic ray from $x$ to $\theta$ with $d(x,y)\geq d$, we have
$$\mu_y(A_{x,\alpha}^\theta)>\frac 12.$$
Note that we have
\begin{eqnarray}   \label{eq:geometric lemma 2}
\mu_y \left( \partial M \setminus A_{x,\alpha}^\theta \right) =\int_{\partial M \setminus A_{x,\alpha}^\theta} \frac{d\mu_y}{d\mu_x}(\xi) d\mu_x(\xi).
\end{eqnarray}
We deduce, from Lemma \ref{lem:geometric Lemma 3} and formula (\ref{eq:geometric lemma 2}), that for all points $y$ on a geodesic ray from $x$ to $\theta$ with $d(x,y)\geq d_1$,
$\mu_y(\partial M \setminus A_{x,\alpha}^\theta)\leq C_1 \cdot G(x,y)$.
Since the Green function $G$ has a uniform exponential decay at infinity (Proposition \ref{prop:exponential decay}), there exists a $d$ depending only on $\alpha$ and $\delta$ such that for all point $y$ on a geodesic ray from $x$ to $\theta$, with $d(x,y)\geq d$,
$$\mu_y(A_{x,\alpha}^\theta)>\frac 12.$$
We conclude by Harnack inequality on balls (Theorem \ref{thm:Harnack on balls}): if $x,y\in M$ with $d(x,y)=d$, we have
$$\mu_x(A_{x,\alpha}^\theta)\geq C(\alpha,\delta) \cdot \mu_y(A_{x,\alpha}^\theta)\geq \eta>0$$
and the lemma is proved.
\end{proof}

We can now prove Lemma \ref{lem:geometric lemma 1}.

\begin{proof}[Proof of Lemma \ref{lem:geometric lemma 1}]
Fix $c_0:=K+6\delta$, where $K$ denotes the constant coming from the roughly starlike assumption on $M$. Let $c\geq c_0$, $E$ be a borelian set in $\partial M$, and $x\not\in \Gamma_c(E)$. Choose a geodesic ray $\overline\gamma$ with origin $o$ such that $d(x,\overline\gamma)\leq K$, and denote by $\theta\in \partial M$ the endpoint of $\overline\gamma$. Since $x\not\in \Gamma_c(E)$, $\theta \not\in E$ (see figure \ref{fig:proof lemma 1}). 
\begin{figure}
\begin{center}
\includegraphics[width=4.5cm,height=5cm]{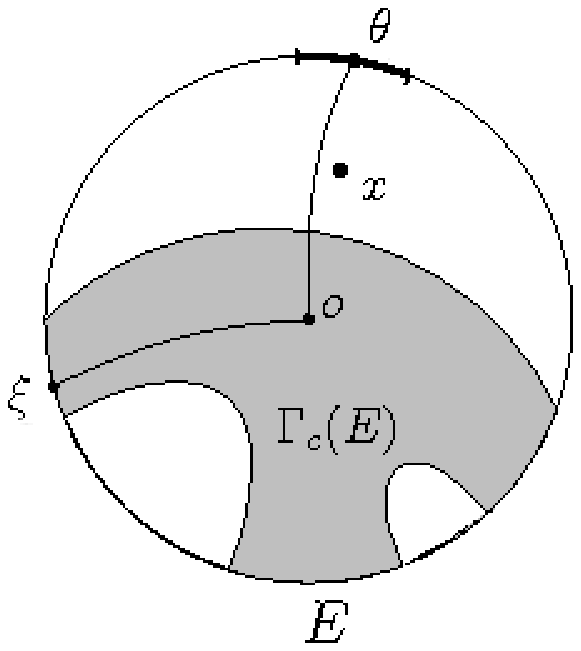}
\caption{Proof of Lemma \ref{lem:geometric lemma 1}}
\label{fig:proof lemma 1}
\end{center}
\end{figure}
We prove that there exists a constant $\alpha>0$ depending only on $\delta$ and $K$ such that $A_{x,\alpha}^\theta \subset \partial M \setminus E$. We want to bound the quantity $(\xi,\theta)_x$ uniformly from above for $\xi\in E$. Fix $\xi\in E$. Inequality (\ref{ineq:hyperbolicity 2}) gives
$$\min \{ (\xi,\theta)_x,(\xi,o)_x \} \leq (\theta,o)_x +2\delta.$$
On one hand, since by inequality (\ref{ineq:Gromov product and geodesics}), $(\theta,o)_x\leq d(x,\overline\gamma)+2\delta \leq K+2\delta$, we have $\min \{ (\theta,\theta)_x,(\xi,o)_x \} \leq K+4\delta$. On the other hand, denoting by $\gamma$ a geodesic ray from $o$ to $\xi$, we have $(\xi,o)_x\geq d(x,\gamma)-2\delta\geq c-2\delta\geq K+4\delta$. We thus deduce that $(\xi,\theta)_x\leq K+4\delta$. Since this holds for all $\xi\in E$, we obtain
$$A_{x,K+5\delta}^\theta \cap E =\emptyset.$$
By Lemma \ref{lem:geometric lemma 2}, there exists an $\eta>0$ depending only on $\delta$ and $K$ such that 
$$\mu_x(E)\leq 1-\eta,$$
which concludes the proof of Lemma \ref{lem:geometric lemma 1}.
\end{proof}

For a borelian set $E\subset \partial M$, denote by $v_E(x):=\mu_x(E)=\mathbb P_x(X_\infty \in E)$.
Let $U$ be an open set in $M$. Recall that a point $\theta\in \partial M$ is called \textit{tangential for $U$} if for all $c>0$, the set $\Gamma_c^\theta \setminus U$ is bounded.
 The following corollary of Lemma \ref{lem:geometric lemma 1} asserts that for almost every point $\theta$ that is tangential for an open set $U$ in $M$, Brownian motion "ends its life $\mathbb P_o^\theta$-almost surely" in $U$.

\begin{corollary} \label{cor:end lemma}
Let $U$ be an open set in $M$. Then for $\mu$-almost all $\theta$ that are tangential for $U$, $\mathbb P_o^\theta$-almost surely, $X_t\in U$ for $t$ large enough.
\end{corollary}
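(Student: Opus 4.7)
The plan is to combine Lemma \ref{lem:geometric lemma 1} with the boundary convergence of bounded harmonic functions (Lemma \ref{lem:bounded harmonic functions}) and the conditioning formula (\ref{eq:conditioning}). Write $E \subset \partial M$ for the set of points tangential for $U$ and set
\[
B := \{\omega \in \Omega : X_t(\omega) \notin U \text{ for arbitrarily large } t\},
\]
an $\mathcal F_\infty$-measurable event. By (\ref{eq:conditioning}),
\[
\mathbb{P}_o(\{X_\infty \in E\} \cap B) = \int_E \mathbb{P}_o^\theta(B) \, d\mu_o(\theta),
\]
so it suffices to prove that the left hand side vanishes.

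Fix $c > c_0$ with $c_0$ and $\eta$ as in Lemma \ref{lem:geometric lemma 1}, and write $E = \bigcup_{n \geq 1} E_n$ where
\[
E_n := \{\theta \in E : \Gamma_c^\theta \setminus U \subset B(o,n)\}.
\]
Tangentiality of $\theta$ ensures that $\Gamma_c^\theta \setminus U$ is bounded, hence contained in some such ball, so this decomposition exhausts $E$. Ensuring Borel measurability of the sets $E_n$ (so that $\mu_x(E_n)$ is defined) is the principal technical obstacle; it can be handled by exploiting the fact that in a $\delta$-hyperbolic space all geodesic rays from $o$ to a fixed $\theta$ stay within uniformly bounded distance of each other, so $\Gamma_c^\theta$ coincides up to a bounded error with the $c$-neighborhood of a single geodesic ray from $o$ to $\theta$, giving sufficient regularity in $\theta$.

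Now fix $n$ and consider the bounded harmonic function $h(x) := v_{E_n}(x) = \mu_x(E_n)$. By Lemma \ref{lem:bounded harmonic functions} applied to $h$, $h(X_t) \to \ind{E_n}(X_\infty)$ $\mathbb{P}_o$-almost surely. On the other hand, Lemma \ref{lem:geometric lemma 1} applied to the Borel set $E_n$ yields $h(x) \leq 1 - \eta$ whenever $x \notin \Gamma_c(E_n)$. Combining these two facts, $\mathbb{P}_o$-almost surely on $\{X_\infty \in E_n\}$ we have $h(X_t) \to 1$, so $h(X_t) > 1 - \eta$ for all sufficiently large $t$, forcing $X_t \in \Gamma_c(E_n)$ eventually.

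For such $t$, write $X_t \in \Gamma_c^{\xi_t}$ with $\xi_t \in E_n$. If $X_t \notin U$, then $X_t \in \Gamma_c^{\xi_t} \setminus U \subset B(o,n)$. Since $X_t \to X_\infty \in \partial M$ almost surely, $d(o, X_t) \to \infty$ and thus $X_t \in U$ for all sufficiently large $t$. This proves $\mathbb{P}_o(\{X_\infty \in E_n\} \cap B) = 0$ for every $n$; summing over $n$ yields $\mathbb{P}_o(\{X_\infty \in E\} \cap B) = 0$, as required.
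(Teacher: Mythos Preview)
Your proof is correct and follows essentially the same route as the paper: decompose the tangential set into the countable pieces $E_n=\{\theta:\Gamma_c^\theta\setminus U\subset B(o,n)\}$, apply Lemma~\ref{lem:bounded harmonic functions} to $v_{E_n}$ together with Lemma~\ref{lem:geometric lemma 1} to force $X_t\in\Gamma_c(E_n)$ eventually, then use $\Gamma_c(E_n)\setminus B(o,n)\subset U$. The only cosmetic difference is that you phrase everything under the unconditioned measure $\mathbb P_o$ and disintegrate at the end, whereas the paper works directly under $\mathbb P_o^\theta$ for $\mu$-a.e.\ $\theta\in E_n$; via formula~(\ref{eq:conditioning}) these are equivalent. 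Your explicit remark on the measurability of $E_n$ is a point the paper leaves implicit.
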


\begin{proof}
Let $c>c_0$, where $c_0$ is the constant given in Lemma \ref{lem:geometric lemma 1}. Denote by $T$ the set tangential points for $U$ and, for $N\in \N$, let
$$T_N:= \{ \theta\in \partial M \, | \, \Gamma_c^\theta \setminus U \subset B(o,N) \}.$$
By countable union, it is sufficient to prove, for each $N\in \N$, that for $\mu$-almost all $\theta\in T_N$, $X_t \in U$ for $t$ large enough. Fix $N\in \N$.
On one hand, since $v_{T_N}$ is a bounded harmonic function, Lemma \ref{lem:bounded harmonic functions}  asserts that for $\mu$-almost all $\theta\in T_N$, $\mathbb P_o^\theta$-almost surely,
$$\lim_{t\to\infty} v_{T_N}(X_t)= \ind{T_N}(\theta).$$
On the other hand, by Lemma \ref{lem:geometric lemma 1}, 
$$\forall x\not\in \Gamma_c(T_N), v_{T_N}(x)\leq 1-\eta.$$
Thus, for $\mu$-almost all $\theta\in T_N$, $\mathbb P_o^\theta$-almost surely, $X_t\in \Gamma_c(T_N)$ for $t$ large enough. Notice that for such a point $\theta\in T_N$ Brownian motion leaves $\mathbb P_o^\theta$-almost surely the ball $B(o,N)$ and that $\Gamma_c(T_N)\setminus B(o,N) \subset U$ by definition of $T_N$. This proves the corollary.

\end{proof}

\begin{corollary}  \label{cor:spikes lemma}
Let $c>c_0$ and $E$ be a Borelian subset of $\partial M$. Every $\theta\in\partial M$ such that $v_E$ converges non-tangentially to 1 at $\theta$ is tangential for $\Gamma_c(E)$. In particular, $\mu$-almost all $\theta\in E$ is tangential for $\Gamma_c(E)$.
\end{corollary}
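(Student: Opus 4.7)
The plan is to prove the first sentence by a direct contradiction using the geometric Lemma \ref{lem:geometric lemma 1}, and then to deduce the ``in particular'' clause as an application of Lemma \ref{lem:bounded harmonic functions} to the bounded harmonic function $v_E$.

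For the first claim, I would fix $\theta \in \partial M$ at which $v_E$ converges non-tangentially to $1$, and suppose toward a contradiction that $\Gamma_{c'}^\theta \setminus \Gamma_c(E)$ is unbounded for some $c' > 0$. One can then extract a sequence $(x_n)$ in $\Gamma_{c'}^\theta \setminus \Gamma_c(E)$ with $d(o, x_n) \to \infty$. Since $c > c_0$ and $x_n \notin \Gamma_c(E)$, Lemma \ref{lem:geometric lemma 1} yields the uniform estimate $v_E(x_n) = \mu_{x_n}(E) \leq 1-\eta$. On the other hand, because $(x_n)$ stays in the non-tangential cone $\Gamma_{c'}^\theta$ and tends to $\theta$, the assumed non-tangential convergence of $v_E$ to $1$ at $\theta$ forces $v_E(x_n) \to 1$, which contradicts the previous bound.

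For the ``in particular'' clause, I would observe that $v_E(x) = \mu_x(E) = \int_{\partial M} \ind{E}\, d\mu_x$ is a bounded harmonic function on $M$, so Lemma \ref{lem:bounded harmonic functions} identifies $\ind{E}$ as the non-tangential limit of $v_E$ at $\mu$-almost every boundary point. In particular, at $\mu$-almost every $\theta \in E$ the function $v_E$ has non-tangential limit $1$, and the first part of the corollary then yields that $\theta$ is tangential for $\Gamma_c(E)$. I do not expect any genuine obstacle here: the corollary essentially packages Lemmas \ref{lem:geometric lemma 1} and \ref{lem:bounded harmonic functions} together. The only minor point to verify is that a sequence $(x_n) \subset \Gamma_{c'}^\theta$ with $d(o,x_n) \to \infty$ is an admissible non-tangential approach to $\theta$, which is immediate from the definition of $\Gamma_{c'}^\theta$ as a tubular neighborhood of geodesic rays ending at $\theta$.
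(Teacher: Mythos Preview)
Your proposal is correct and follows essentially the same argument as the paper: a contradiction via Lemma \ref{lem:geometric lemma 1} applied to a sequence in $\Gamma_{c'}^\theta \setminus \Gamma_c(E)$ escaping to infinity, followed by an appeal to Lemma \ref{lem:bounded harmonic functions} for the ``in particular'' clause. The paper's proof is virtually identical in structure and in the lemmas invoked.
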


\begin{proof}
Let $\theta\in \partial M$ be such that $v_E$ converges non-tangentially to 1 at $\theta$ and let $\Gamma_e^\theta$ be a non-tangential tube with vertex $\theta$. Assume that $\Gamma_e^\theta \setminus \Gamma_c(E)$ is not bounded. Then there exists a sequence $(x_k)_k$ of points in $\Gamma_e^\theta \setminus \Gamma_c(E)$ such that $d(o,x_k)>k$. We thus have $v_E(x_k)\to 1$. However, by Lemma \ref{lem:geometric lemma 1}, $v_E(x_k)\leq 1-\eta$, which gives a contradiction and proves the main statement of the corollary. In addition, by Lemma \ref{lem:bounded harmonic functions}, $v_E$ converges non-tangentially to 1 at $\mu$-almost all $\theta\in E$ and $\mu$-almost every $\theta\in E$ is tangential for $\Gamma_c(E)$.
\end{proof}

\subsection{Behaviour of Green functions}

The next Lemma yields an estimate for the increasing rate of the minimal harmonic function $K(\cdot,\theta)$ along non-tangential tubes with vertex $\theta$. The proof is a straightforward application of Theorem \ref{thm:submultiplicativity Green function} and Theorem \ref{thm:Harnack on balls}  (see \cite{Anc90} page 99).

\begin{lemma}    \label{lem:comparison Green Poisson}
For all $c>0$, there exist $0<C<1$ and $R>0$ such that for all $\theta\in \partial M$, all $x\in \Gamma_c^\theta$, and all $y\in \Gamma_c^\theta \setminus B(o,R)$,
$$C \leq G(o,x)K(x,\theta) \text{ and } G(o,y)K(y,\theta)\leq C^{-1}.$$
\end{lemma}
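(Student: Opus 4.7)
The plan is to show $G(o,x)\,K(x,\theta)\asymp 1$ for $x\in\Gamma_c^\theta$ with $d(o,x)$ sufficiently large, by expressing the Poisson kernel as a limit along a geodesic ray from $o$ to $\theta$ shadowing $x$, and combining the submultiplicativity of the Green function (Theorem~\ref{thm:submultiplicativity Green function}) along that ray with the Harnack inequality on balls (Theorem~\ref{thm:Harnack on balls}). The lower bound in the bounded region will then be a separate, easier estimate.

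Fix $\theta\in\partial M$ and $x\in\Gamma_c^\theta$. Choose a geodesic ray $\gamma$ from $o$ to $\theta$ with $d(x,\gamma)<c$ and let $x'=\gamma(t_0)$ realise this distance, so $d(x,x')<c$ and $d(o,x')\geq d(o,x)-c$. Take $z_n:=\gamma(s_n)$ with $s_n\to\infty$; then $z_n\to\theta$ along $\gamma$ and $K(x,\theta)=\lim_n G(x,z_n)/G(o,z_n)$. For $n$ large, $x'$ lies on the geodesic segment from $o$ to $z_n$ with $\min\{d(o,x'),d(x',z_n)\}\geq 1$, so Theorem~\ref{thm:submultiplicativity Green function} provides a universal $A>0$ with
$$A^{-1}\,G(o,x')\,G(x',z_n)\leq G(o,z_n)\leq A\,G(o,x')\,G(x',z_n).$$
Now set $R>3c$ and assume $d(o,x)>R$, so that the ball $B(x',2c)$ avoids $o$ and, for $n$ large, $z_n$. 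Applying Theorem~\ref{thm:Harnack on balls} to the harmonic functions $G(o,\cdot)$ and $G(\cdot,z_n)$ on this ball yields $G(o,x)\asymp G(o,x')$ and $G(x,z_n)\asymp G(x',z_n)$ with a constant depending only on $c$. Combining these with the submultiplicativity estimate and letting $n\to\infty$ produces the uniform two-sided bound $G(o,x)\,K(x,\theta)\asymp 1$ for all $x\in\Gamma_c^\theta\setminus B(o,R)$ and all $\theta\in\partial M$. This delivers the upper bound asserted in the lemma, together with the lower bound on the same set.

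It remains to establish the lower bound on $\Gamma_c^\theta\cap B(o,R)$. Since $K(\cdot,\theta)$ is a non-negative harmonic function on $M$ with $K(o,\theta)=1$, Theorem~\ref{thm:Harnack on balls} applied on a ball of radius $R+1$ centred at $o$ yields $K(y,\theta)\geq\eta_R>0$ uniformly in $\theta$ and $y\in B(o,R)$. On the other hand $G(o,\cdot)$ is positive and continuous on $M\setminus\{o\}$ and blows up at $o$, hence admits a positive lower bound on $B(o,R)\setminus\{o\}$; multiplying the two estimates gives the required bound. The main obstacle is making sure that every constant is independent of $\theta$ and of the particular choice of $\gamma$: this is precisely what the universal constant of Theorem~\ref{thm:submultiplicativity Green function} and the radius-only dependence of the Harnack constants from Theorem~\ref{thm:Harnack on balls} guarantee, so the difficulty really reduces to bookkeeping the geometric choices made above.
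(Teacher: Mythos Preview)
Your proof is correct and follows exactly the route the paper indicates: the paper itself does not give a proof but merely says it is a straightforward application of Theorem~\ref{thm:submultiplicativity Green function} and Theorem~\ref{thm:Harnack on balls}, which is precisely what you carry out. The only cosmetic remark is to ensure $R$ is also chosen so that $d(o,x')\geq 1$ (e.g.\ $R>\max(3c,c+1)$), so that the hypothesis of Theorem~\ref{thm:submultiplicativity Green function} is met.
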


For an open set $U\subset M$, denote by $G_U$ the Green function of $U$. The next lemma allows to compare $G$ and $G_U$ for a class of subsets $U\subset M$. This will be useful in the proof of Theorem \ref{thm:density energy}.

\begin{lemma}   \label{lem:comparison Green functions}
Fix large $c>e>0$ and $\theta\in \partial M$. Let $U$ be an open subset of $M$ containing $\Gamma_c^\theta$ and denote by $\tau$ the exit time of $U$. Then we have
$$\lim_{x\to\theta, x\in \Gamma_e^\theta} \frac{G_U(o,x)}{G(o,x)}=\mathbb P_o^\theta(\tau=+\infty).$$
\end{lemma}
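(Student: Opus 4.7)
The strategy is probabilistic. The classical decomposition of the Green function obtained by applying the strong Markov property at $\tau$ reads, for $x\in U$,
\begin{equation*}
G(o,x) \;=\; G_U(o,x) + \mathbb{E}_o\bigl[G(X_\tau, x)\,\mathbf{1}_{\tau<\infty}\bigr],
\end{equation*}
since $x\mapsto \mathbb{E}_o[G(X_\tau,x)\mathbf{1}_{\tau<\infty}]$ is the harmonic extension to $U$ of $G(\cdot,x)$ restricted to $\partial U$. Dividing by $G(o,x)$ reduces the lemma to showing
\begin{equation*}
\lim_{x\to\theta,\,x\in\Gamma_e^\theta}\;\mathbb{E}_o\Bigl[\frac{G(X_\tau,x)}{G(o,x)}\,\mathbf{1}_{\tau<\infty}\Bigr] \;=\; \mathbb{P}_o^\theta(\tau<\infty),
\end{equation*}
since then $G_U(o,x)/G(o,x)\to 1-\mathbb{P}_o^\theta(\tau<\infty)=\mathbb{P}_o^\theta(\tau=\infty)$.

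The pointwise convergence of the integrand is the very definition of the Poisson kernel: on $\{\tau<\infty\}$, for the fixed value $y=X_\tau(\omega)$, one has $G(y,x)/G(o,x)\to K(y,\theta)$ as $x\to\theta$. To identify the limiting expectation, I would invoke Doob's $h$-process description of $\mathbb{P}_o^\theta$: at the stopping time $\tau$, the density $d\mathbb{P}_o^\theta/d\mathbb{P}_o$ on $\mathcal{F}_\tau\cap\{\tau<\infty\}$ is $K(X_\tau,\theta)$ (normalized so $K(o,\theta)=1$), whence $\mathbb{E}_o[K(X_\tau,\theta)\mathbf{1}_{\tau<\infty}]=\mathbb{P}_o^\theta(\tau<\infty)$. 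Combining these two facts gives the desired limit modulo a dominated convergence justification.

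That domination step is the real content of the proof. The idea is to produce a constant $C$ and a suitable neighbourhood $V$ of $\theta$ in $\Gamma_e^\theta$ such that
\begin{equation*}
\frac{G(y,x)}{G(o,x)} \;\leq\; C\cdot K(y,\theta)\qquad\text{for all } x\in V,\ y\in\partial U,
\end{equation*}
after which the dominating function $\omega\mapsto C\cdot K(X_\tau(\omega),\theta)\mathbf{1}_{\tau<\infty}$ is $\mathbb{P}_o$-integrable (its integral is at most $C$) and dominated convergence applies. To establish this uniform bound I would apply the Harnack principle at infinity (Theorem \ref{thm:Harnack at infinity}, part (1)) on a shadow set $U_i^\gamma$ for a geodesic ray $\gamma$ from $o$ to $\theta$, with $i$ chosen so large that $U_i^\gamma\subset\Gamma_c^\theta\subset U$ (hence $\partial U$ avoids $U_i^\gamma$). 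Applying the principle to the pairs $\bigl(G(y,\cdot),K(\cdot,\theta)\bigr)$ and $\bigl(G(o,\cdot),K(\cdot,\theta)\bigr)$, both of which are harmonic on $U_i^\gamma$ and vanish on $\overline{U_i^\gamma}\cap \partial M\setminus\{\theta\}$, yields two–sided estimates that when divided give
\begin{equation*}
\frac{G(y,x)}{G(o,x)} \;\asymp\; \frac{G(y,a_{i+1}^\gamma)}{G(o,a_{i+1}^\gamma)}\qquad\text{for } x\in U_{i+1}^\gamma,
\end{equation*}
with constants independent of $y\notin U_i^\gamma$. Passing $x\to\theta$ in this comparison identifies the right–hand side with $K(y,\theta)$ up to a uniform constant, which is exactly the sought domination. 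The main obstacle is arranging this Ancona–type comparison with constants uniform in $y\in\partial U$ and making sure $\Gamma_e^\theta$ eventually lies inside $U_{i+1}^\gamma$, which is where Lemma \ref{lem:comparison Green Poisson} (and the fact $e<c$) enter to control the relation between tubular neighbourhoods of geodesics and the shadow sets $U_i^\gamma$.
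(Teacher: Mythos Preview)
Your overall architecture matches the paper exactly: the decomposition $G_U(o,x)=G(o,x)-\mathbb{E}_o[G(X_\tau,x)\mathbf{1}_{\tau<\infty}]$, the pointwise limit via the definition of $K(\cdot,\theta)$, the identification $\mathbb{E}_o[K(X_\tau,\theta)\mathbf{1}_{\tau<\infty}]=\mathbb{P}_o^\theta(\tau<\infty)$, and the reduction to a domination bound
\[
\frac{G(z,x)}{G(o,x)}\leq C\,K(z,\theta)\qquad (x\in\Gamma_e^\theta\ \text{far out},\ z\notin\Gamma_c^\theta)
\]
are all exactly what the paper does.

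The gap is in your justification of the domination. You propose to choose $i$ large enough that $U_i^\gamma\subset\Gamma_c^\theta\subset U$, so that every $y\in\partial U$ lies outside $U_i^\gamma$ and $G(y,\cdot)$ is harmonic there. But this inclusion is false no matter how large $i$ is: the shadow $U_i^\gamma=\{y:(y,a_i^\gamma)_o>4i\delta-2\delta\}$ is a half-space--like set containing \emph{all} points whose geodesic from $o$ tracks $\gamma$ past $a_i^\gamma$, and such points can sit arbitrarily far from $\gamma$ (think of a tree: follow $\gamma$ for a long time, then branch off for $1000$ steps). So $U_i^\gamma$ is never contained in any tube $\Gamma_c^\theta$, and $\partial U$ can meet every $U_i^\gamma$. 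Consequently your single application of Theorem~\ref{thm:Harnack at infinity} does not cover all $y=X_\tau$.

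The paper fixes this with a genuine case analysis on the position of $z\notin\Gamma_c^\theta$. When $z\notin U_3^\gamma$ your argument (Harnack at infinity along $\gamma$ plus Harnack on balls near $o$) works verbatim. The substantive case is $z\in U_3^\gamma\setminus\Gamma_c^\theta$: here the paper invokes the \emph{roughly starlike} hypothesis to pick a second geodesic ray $\gamma'$ starting from the nearest point $o'\in\gamma$ to $z$ and passing near $z$, applies Theorem~\ref{thm:Harnack at infinity} along $\gamma'$ to reduce to $d(z,\gamma)\leq 8\delta$, and then splits again according to whether $x$ is far from or near $o'$, using Theorem~\ref{thm:submultiplicativity Green function}, Lemma~\ref{lem:comparison Green Poisson} and Proposition~\ref{prop:exponential decay}. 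The roughly starlike assumption, which does not appear in your sketch, is precisely what makes this reduction possible; without it the domination bound is not available by a single $\phi$-chain argument.
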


\begin{proof}
Since $G(\cdot,x)$ vanishes at infinity,
\begin{eqnarray*}
G_U(o,x) &=& G(o,x)-\mathbb E_o \left[ G(X_\tau,x) \right] \\
&=& G(o,x) \left( 1-\mathbb E_o \left[ \frac{G(X_\tau,x)}{G(o,x)}\cdot \ind{\tau<+\infty} \right] \right).
\end{eqnarray*}
Recall that for $\tau<+\infty$, $\lim_{x\to\theta} \frac{G(X_\tau,x)}{G(o,x)}=K(X_\tau,\theta)$. Hence, provided changing the order of limit and expectation is justified, we have
$$\lim_{x\in \Gamma_e^\theta, x\to\theta} \mathbb E_o \left[ \frac{G(X_\tau,x)}{G(o,x)}\cdot \ind{\tau<+\infty} \right] =\mathbb E_o \left[ K(X_\tau,\theta)\cdot \ind{\tau<+\infty}\right]=\mathbb P_o^\theta (\tau<+\infty)$$
and the lemma follows. It remains to justify changing the order of limit and expectation, which will be achieved by proving the following property:

There exists a constant $C>0$ such that
\begin{equation}   \label{inversion limit expectation}
\forall x\in \Gamma_e^\theta \setminus B(o,c), \forall z\not\in \Gamma_c^\theta, \frac{G(z,x)}{G(o,x)} \leq C\cdot K(z,\theta).
\end{equation}

To prove (\ref{inversion limit expectation}), we will apply Theorem \ref{thm:Harnack at infinity} several times with $u=G(\cdot,y)$ for a point $y\in M$ and $v=K(\cdot,\theta)$. The function $G(\cdot,y)$ is positive harmonic on $M\setminus \{ y\}$, vanishes at infinity, and the function $K(\cdot,\theta)$ is positive harmonic. The assumptions of Theorem \ref{thm:Harnack at infinity} will thus always be satisfied. In the rest of the proof, the constants depend only on the Gromov hyperbolicity constant $\delta$, on the roughly starlike constant $K$ and on $c$ and $e$. 

Let $z\not\in \Gamma_c^\theta$, $x\in \Gamma_e^\theta\setminus B(o,c)$ and let $\gamma$ be a geodesic ray starting at $o$ and converging to $\theta$ such that $d(x,\gamma)<e$. First, remark that by Theorem \ref{thm:Harnack on balls}, we can assume $x\in \gamma\setminus B(o,c)$. Indeed, if $x'\in \gamma\setminus B(o,c)$ is such that $d(x,\gamma)=d(x,x')$, then $\frac{G(z,x)}{G(o,x)}\leq C_0 \cdot \frac{G(z,x')}{G(o,x')}$.

Denote, for $i\in \N^*, a_i:=a_i^\gamma=\gamma(4i\delta)$ and $U_i:=U_i^\gamma=\{ y\in M \, | \, (y,a_i)_o >d(o,a_i)-2\delta\}$. We split the proof in different cases:

\textbf{Case 1:} $z\not\in U_3$.

By Theorem \ref{thm:Harnack at infinity}, there exists $C_1>0$ such that
$$\frac{G(z,x)}{K(z,\theta)}\leq C_1\cdot \frac{G(a_2,x)}{K(a_2,\theta)}.$$
By definition of $a_2$, $d(o,a_2)=8\delta$ and using once again Theorem \ref{thm:Harnack on balls}, there exists $C_2>0$ such that
$$\frac{G(z,x)}{K(z,\theta)}\leq  C_2 \cdot \frac{G(o,x)}{K(o,\theta)}=C_2\cdot G(o,x),$$
which gives (\ref{inversion limit expectation}) in case 1.

\textbf{Case 2:} $z\in U_3$. 

By definition of $U_3$, $d(o,z)>d(a_3,z)+8\delta$. Denote by $o'$ a point in $\gamma$ such that $d(z,o')=\min_{z'\in \gamma} d(z,z')$. Since $d(a_3,z)\geq d(z,o')$, we have $d(o,o')\geq  8\delta$. Denote by $\gamma'$ a geodesic ray starting at $o'$ and within a distance at most $K$ from $z$ (recall that $M$ is $K$-roughly starlike). If $c$ is large enough (depending on $\delta$ and $K$), it is an easy exercise to prove that $z\in U_3^{\gamma'}$. We can thus apply Theorem \ref{thm:Harnack at infinity} to have
$$\frac{G(z,x)}{K(z,\theta)}\leq C_3\cdot \frac{G(\gamma'(8\delta),x)}{K(\gamma'(8\delta),\theta)}.$$
Hence it is sufficient to prove (\ref{inversion limit expectation}) for a point $z$ within distance at most $8\delta$ from $\gamma$. Let $z$ be such a point and denote again by $o'\in\gamma$ a point so that $d(z,o')=\min_{z'\in\gamma} d(z,z')$. There are two cases, illustrated by Figure \ref{fig:utilisation_Harnack}.

\begin{figure}
\begin{center}
\includegraphics[width=12cm,height=4cm]{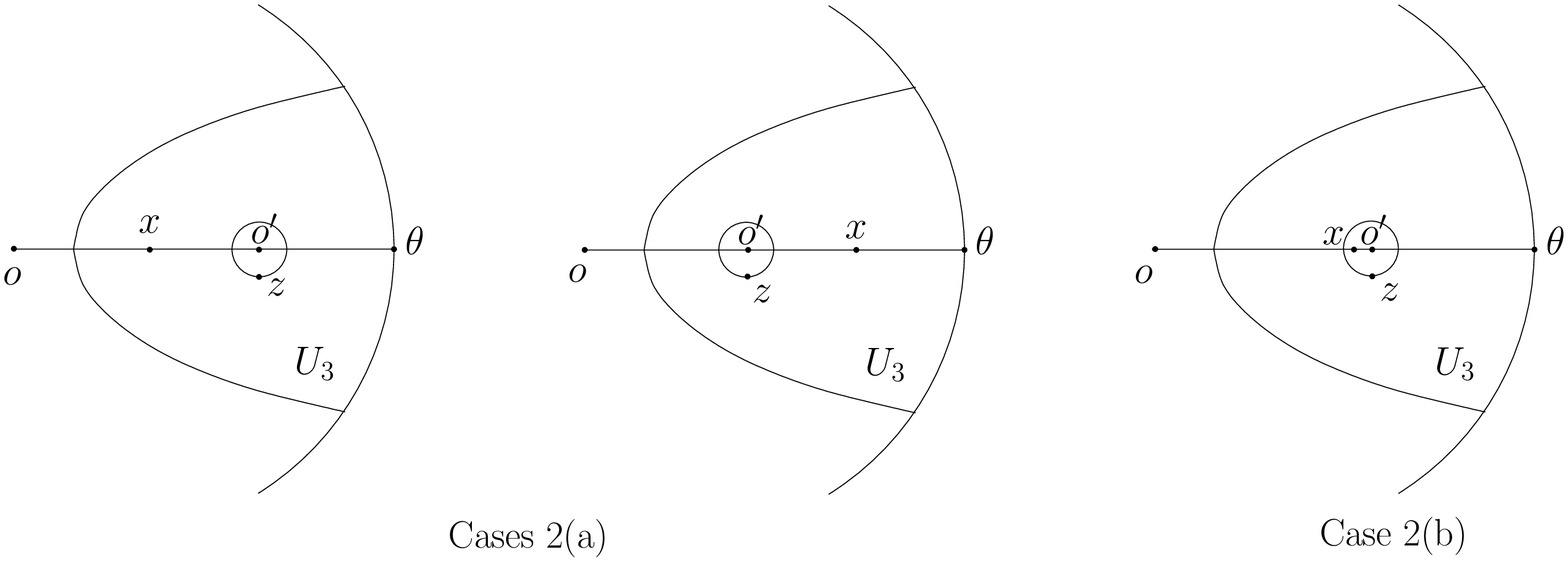}
\caption{Proof of Lemma \ref{lem:comparison Green functions}}
\label{fig:utilisation_Harnack}
\end{center}
\end{figure}

\textbf{Case 2(a):} $d(x,o')>16\delta$.

In that case, Theorem \ref{thm:Harnack on balls} yields that it is sufficient to prove
\begin{equation}  \label{eq:case 2(a)}
\frac{G(o',x)}{G(o,x)}\leq C_4\cdot K(o',\theta).
\end{equation}

Recall that the three points $o$, $o'$ and $x$ lie on the geodesic ray $\gamma$.
\begin{itemize}
 \item If $o'$ is between $o$ and $x$, we apply Theorem \ref{thm:Harnack at infinity} with base point $x$ and with $a_i, i=1,2$ the points of $\gamma$ such that $d(o,x)=d(o,a_i)+4i\delta$ and get
$$\frac{G(o',x)}{K(o',\theta)}\leq C_5\cdot \frac{G(a_1,x)}{K(a_1,\theta)}.$$
Using Theorem \ref{thm:Harnack on balls}, we obtain (\ref{eq:case 2(a)}).\\

\item If $x$ is between $o$ and $o'$, we apply Theorem \ref{thm:submultiplicativity Green function} (recall that $K(\cdot,\theta)=\lim_{y\to\theta} \frac{G(\cdot,y)}{G(o,y)}$) and obtain
$$\frac{K(x,\theta)}{K(o',\theta)}\leq C_6\cdot G(x,o').$$
Since $G(o,x)K(x,\theta)\geq C$ (Lemma \ref{lem:comparison Green Poisson}) and since $G(x,o')\leq C_7$ (Proposition \ref{prop:exponential decay}), we obtain (\ref{eq:case 2(a)}).
\end{itemize}

\textbf{Case 2(b):} $d(x,o')\leq 16\delta$.

Since $8\delta\leq d(x,z)\leq 24\delta$, $K(x,\theta)\leq C_{8} \cdot K(z,\theta)$ and $G(x,z)\leq C_{9}$. Combining these two inequalities with $G(o,x)K(x,\theta)\geq C$, we get property (\ref{inversion limit expectation}) in case 2(b). Changing the order of limit and expectation is justified and the proof is complete.

\end{proof}

\subsection{Brownian motion and non-tangential sets}

Harnack principles allow to prove the following lemma (see Figure \ref{fig:BMandBalls} for an illustration), which helps build connections between stochastic properties and non-tangential properties. A.~Ancona stated it in a potential theory terminology (\cite{Anc90}, Lemma 6.4) and used it to prove a Fatou's Theorem.

\begin{lemma}  \label{lem:Brownian motion and NT balls}
Consider a sequence of balls of fixed positive radius whose centers converge non-tangentially to a point $\theta\in \partial M$, that is, converge to $\theta$ staying in a non-tangential cone $\Gamma_c^\theta$ for some $c>0$. Then Brownian motion meets $\mathbb P_o^\theta$-almost surely infinitely many of these balls.
\end{lemma}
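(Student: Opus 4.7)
The plan is to prove a uniform lower bound on the $\mathbb P_o^\theta$-probability of hitting each individual ball, and then to conclude by the asymptotic zero--one law applied to the tail event of meeting infinitely many balls.

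Set $B_k := B(x_k,r)$ and let $T_k := \inf\{t\geq 0:\, X_t \in B_k\}$. I first would establish the key uniform lower bound
$$\inf_{k}\mathbb P_o^\theta(T_k<+\infty)\geq \eta>0.$$
By the Doob $h$-process formula with $h=K(\cdot,\theta)$, normalized so that $K(o,\theta)=1$, the conditional hitting probability equals $\mathbb E_o\!\left[K(X_{T_k},\theta)\mathbf 1_{T_k<+\infty}\right]$. Since $K(\cdot,\theta)$ is positive harmonic and $X_{T_k}\in\partial B_k$ on $\{T_k<+\infty\}$, the Harnack inequality on balls (Theorem \ref{thm:Harnack on balls}) gives $K(X_{T_k},\theta)\geq C^{-1}K(x_k,\theta)$, whence
$$\mathbb P_o^\theta(T_k<+\infty)\geq C^{-1}K(x_k,\theta)\cdot\mathbb P_o(T_k<+\infty).$$

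Next I would bound $\mathbb P_o(T_k<+\infty)$ from below by the Green function. Because $M$ has bounded local geometry, there exists a constant $M_r$ depending only on $r$ and the local geometry such that $G(z,x_k)\leq M_r$ for every $z\in\partial B_k$ and every $k$. The functions $f_k(y):=\mathbb P_y(T_k<+\infty)$ and $g_k(y):=G(y,x_k)/M_r$ are both non-negative and harmonic on $M\setminus\bar B_k$, satisfy $g_k\leq 1=f_k$ on $\partial B_k$, and $g_k\to 0$ at infinity by Proposition \ref{prop:exponential decay}. Exhausting $M\setminus\bar B_k$ by relatively compact sets and applying the minimum principle to $f_k-g_k$ yields $f_k\geq g_k$, hence $\mathbb P_o(T_k<+\infty)\geq M_r^{-1}G(o,x_k)$. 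Combining this with the previous inequality and Lemma \ref{lem:comparison Green Poisson}, which gives $G(o,x_k)K(x_k,\theta)\geq C$ because $x_k\in\Gamma_c^\theta$, produces the uniform bound $\mathbb P_o^\theta(T_k<+\infty)\geq\eta$.

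Finally, the event $E$ that the Brownian path meets infinitely many $B_k$ is tail-measurable: as $x_k\to\theta$, for every $N\in\mathbb N$ only finitely many $B_k$ intersect $B(o,N)$, so meeting some $B_k$ with $k$ large depends only on the trajectory after $\tau_N$. By the asymptotic zero--one law for $\mathbb P_o^\theta$, $\mathbb P_o^\theta(E)\in\{0,1\}$. If $\mathbb P_o^\theta(E)=0$, then $\mathbb P_o^\theta\!\left(\bigcup_{k\geq N}\{T_k<+\infty\}\right)\to 0$ as $N\to\infty$, which contradicts the uniform lower bound $\mathbb P_o^\theta(T_N<+\infty)\geq\eta$. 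Hence $\mathbb P_o^\theta(E)=1$. The main obstacle is the uniform lower bound on $\mathbb P_o(T_k<+\infty)$: bounded local geometry is what keeps the bound on $G(\cdot,x_k)$ over $\partial B_k$ independent of $k$, while the non-tangentiality $x_k\in\Gamma_c^\theta$ only enters at the last step, through the Green--Poisson comparison of Lemma \ref{lem:comparison Green Poisson}.
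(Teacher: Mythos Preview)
The paper does not give its own proof of this lemma; it merely records that ``Harnack principles allow to prove'' it and refers to Ancona \cite{Anc90}, Lemma~6.4, where the statement is phrased in potential-theoretic terms. Your argument is correct and is precisely in the spirit of that hint: the Harnack ingredients you use are Theorem~\ref{thm:Harnack on balls} (to control $K(\cdot,\theta)$ on $\partial B_k$ by its value at $x_k$) and, through Lemma~\ref{lem:comparison Green Poisson}, the submultiplicativity of the Green function (Theorem~\ref{thm:submultiplicativity Green function}), which is exactly what makes $G(o,x_k)K(x_k,\theta)$ bounded below along $\Gamma_c^\theta$.

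A couple of remarks on the details. Your minimum-principle step for $\mathbb P_o(T_k<+\infty)\geq M_r^{-1}G(o,x_k)$ is justified because $g_k\to 0$ at infinity (Proposition~\ref{prop:exponential decay}), so on an exhaustion the contribution of the outer boundary to $f_k-g_k$ tends to zero; equivalently, one gets the same two-sided comparison directly from the optional-stopping identity $G(o,x_k)=\mathbb E_o\bigl[G(X_{T_k},x_k)\mathbf 1_{T_k<\infty}\bigr]$ together with the uniform bounds $m_r\leq G(\cdot,x_k)\leq M_r$ on $\partial B_k$ coming from bounded local geometry. The tail-event verification is clean: since $x_k\to\theta$, by continuity of paths any trajectory meets only finitely many $B_k$ before a fixed time, so the event is measurable with respect to $\bigcap_t\sigma(X_s:s\geq t)$ and the asymptotic zero--one law for the minimal $h$-process applies.

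Ancona's route proceeds via reduced functions (balayage) rather than hitting probabilities, but the underlying estimate is the same: the $\mathbb P_o^\theta$-probability of hitting $B_k$ is uniformly comparable to $G(o,x_k)K(x_k,\theta)$, which stays bounded away from zero on the non-tangential tube. Your probabilistic packaging is arguably more transparent for the reader of this paper, since all the tools invoked (Theorems~\ref{thm:Harnack on balls} and~\ref{thm:submultiplicativity Green function}, Proposition~\ref{prop:exponential decay}, Lemma~\ref{lem:comparison Green Poisson}, and the zero--one law) are already stated here.
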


\begin{figure}
\begin{center}
\includegraphics[width=6cm,height=5cm]{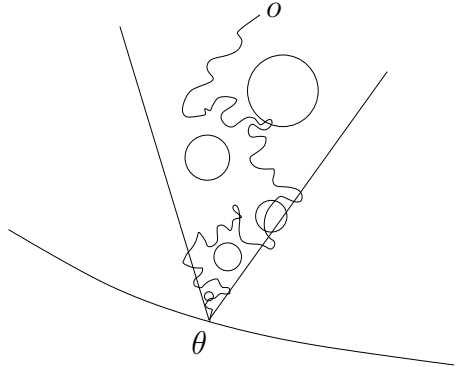}
\caption{Brownian motion and non-tangential balls}
\label{fig:BMandBalls}
\end{center}
\end{figure}

We end this section by proving the following lemma:

\begin{lemma}  \label{lem:proba tau infinite}
Let $U$ be a connected open subset of $M$ such that $o\in U$, and let $\tau$ denote the exit time of $U$. For every $\theta\in \partial M$ such that $\mathbb P_o^\theta$-almost surely, $X_t\in U$ for $t$ large enough, we have $\mathbb P_o^\theta(\tau=+\infty)>0$.
\end{lemma}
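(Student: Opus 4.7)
The plan is to show that the function $f(y) := \mathbb P_y^\theta(\tau = +\infty)$ is strictly positive at $y=o$ by exhibiting $K(\cdot,\theta)\,f$ as a non-negative, not identically zero, ordinary harmonic function on the connected open set $U$, and then invoking the strong minimum principle.

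First I would set $A_N := \{X_t \in U \text{ for all } t \geq N\}$. The hypothesis reads $\mathbb P_o^\theta(\bigcup_N A_N) = 1$, and since $(A_N)_N$ is increasing we get $\mathbb P_o^\theta(A_N) > 0$ for all sufficiently large $N$. The Markov property of the conditioned process applied at the deterministic time $N$ then yields $\mathbb P_o^\theta(A_N) = \mathbb E_o^\theta[f(X_N)]$, so there exists $y_0 \in M$ with $f(y_0) > 0$; since starting outside $U$ forces $\tau = 0$, this $y_0$ automatically lies in $U$.

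Next I would verify that $f$ satisfies a mean value property along the conditioned process on every geodesic ball $\overline{B(y,r)} \subset U$. If $S$ denotes the exit time of such a ball, then under $\mathbb P_y^\theta$ we have $X_t \in U$ for $t \leq S$ automatically, so $\{\tau = +\infty\} = \{X_t \in U \text{ for all } t \geq S\}$, and the strong Markov property at $S$ gives $f(y) = \mathbb E_y^\theta[f(X_S)]$. Rewriting this via the Doob h-transform formula $d\mathbb P_y^\theta/d\mathbb P_y|_{\mathcal F_S} = K(X_S,\theta)/K(y,\theta)$ produces
$$K(y,\theta)\,f(y) = \mathbb E_y\bigl[K(X_S,\theta)\,f(X_S)\bigr],$$
so $K(\cdot,\theta)\,f$ has the mean value property on every ball compactly contained in $U$ and is therefore ordinary harmonic on $U$. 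The strong minimum principle then applies: $K(\cdot,\theta)\,f \geq 0$ is harmonic on the connected open set $U$ and strictly positive at $y_0$, hence strictly positive throughout $U$. Evaluating at $o$ and using $K(o,\theta) \in (0,+\infty)$ yields $f(o) > 0$, i.e.\ $\mathbb P_o^\theta(\tau = +\infty) > 0$.

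The most delicate step is the mean value identity $f(y) = \mathbb E_y^\theta[f(X_S)]$: it rests on the strong Markov property of the conditioned probabilities at the stopping time $S$ and on the standard Doob h-transform correspondence that turns harmonic functions for the conditioned generator into ordinary $\Delta$-harmonic functions after multiplication by $K(\cdot,\theta)$. Both are part of the conditioning apparatus recalled in Section~\ref{sect:BM and conditioning}, but this is where I would take the most care, since the rest of the argument is elementary measure theory and the strong minimum principle for non-negative harmonic functions on a connected open set.
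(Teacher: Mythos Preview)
Your proof is correct and follows essentially the same route as the paper: define $h(x)=K(x,\theta)\,\mathbb P_x^\theta(\tau=+\infty)$, observe it is non-negative harmonic on the connected set $U$, use the hypothesis together with the (strong) Markov property to find a point where it is positive, and conclude by the minimum principle. The only cosmetic differences are that the paper applies the Markov property at the exit time $\tau_N$ of $B(o,N)$ rather than at a deterministic time, and asserts the harmonicity of $h$ directly instead of deriving it from the mean value identity.
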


\begin{remark}
By Corollary \ref{cor:end lemma}, the conclusion holds in particular at $\mu$-almost every point $\theta$ tangential for $U$.
\end{remark}

\begin{proof}[Proof of Lemma \ref{lem:proba tau infinite}]
Let $\theta\in \partial M$ be such that $\mathbb P_o^\theta$-almost surely, $X_t\in U$ for $t$ large enough. Denote by $h$ the non-negative harmonic function on $U$ defined by $h(x):= K(x,\theta) \mathbb P_x^\theta(\tau=+\infty)$. By the maximum principle, $h$ is either positive, or identically zero. We have
$$1=\lim_{N \to \infty} \mathbb P_o^\theta(\forall t\geq \tau_N,\, X_t\in U),$$
where $\tau_N$ denotes the exit time of $B(o,N)$.
Let $N$ be large enough so that $\mathbb P_o^\theta(\forall t\geq \tau_N,\, X_t\in U)>0$. By the strong Markov property,
$$\mathbb P_o^\theta(\forall t\geq \tau_N,\, X_t\in U)=\mathbb E_o^\theta \left[ \mathbb P_o^\theta(\forall t\geq \tau_N,\, X_t\in U) |  \tau_N \right]=\mathbb E_o^\theta [\varphi(X_{\tau_N})],$$
where $\varphi(x):= \mathbb P_x^\theta(\tau=+\infty)$ if $x\in U$ and $\varphi(x):=0$ otherwise. The function $\varphi$, and therefore $h$, is not identically zero. The function $h$ is thus positive and $\mathbb P_o^\theta(\tau=+\infty)>0$, which proves the Lemma.
\end{proof}

%-----------------------------------
%        Proof of theorem         |
%-----------------------------------

\section{Local Fatou theorem}     \label{sect:local Fatou theorem}

The aim of this section is to prove Theorem \ref{thm:local Fatou}. The proof is similar to the proof of Theorem 2 in \cite{Mou07}, and based upon the use of Lemma \ref{lem:proba tau infinite}, which is achieved using Lemma \ref{lem:geometric lemma 1}. Although the main difference with \cite{Mou07} lies in Lemma \ref{lem:geometric lemma 1}, we give here a detailed proof.

\begin{proof}[Proof of Theorem \ref{thm:local Fatou}]

We can assume, without loss of generality, that $U$ is connected (since $U$ is open, it has a countable number of connected components) and that $o\in U$. Denote again by $\tau$ the exit time of $U$.  Let $u$ be a non-negative harmonic function on $U$. The martingale property asserts that $(u(X_{t\wedge \tau}))$ is a non-negative local martingale and therefore converges $\mathbb P_o$-almost surely. By formula (\ref{eq:conditioning}), for $\mu$-almost all $\theta\in \partial M$, $(u(X_{t\wedge \tau}))$ converges $\mathbb P_o^\theta$-almost surely.

By Lemma \ref{lem:proba tau infinite}, for $\mu$-almost all $\theta$ that is tangential for $U$, we have
$$\mathbb P_o^\theta (\tau=+\infty \text{ and } (u(X_t)) \text{ converges})>0.$$
Let $\theta$ be such a point. Denoting by $\tilde u(x)=u(x)$ for $x\in U$ and $\tilde u(x)=0$ otherwise, the asymptotic zero-one law asserts that $\tilde u$ converges stochastically at $\theta$. Denote by $\ell$ the stochastic limit of $\tilde u$ at $\theta$ and assume that $\tilde u$ (and therefore $u$) does not converge non-tangentially to $\ell$ at $\theta$. We will obtain a contradiction with Lemma \ref{lem:Brownian motion and NT balls}. These step is standard (see for instance \cite{BD63} page 403 and \cite{Anc90} page 100). There exist $c>0$, $\varepsilon>0$ and a sequence $(y_k)_k$ of points in $\Gamma_c^\theta \setminus B(o,R)$ converging to $\theta$ such that for every $k$, $|u(y_k)-\ell |>2\varepsilon$, where $R>0$ is such that $\Gamma_{c+1}^\theta \setminus B(o,R) \subset U$. By Harnack inequalities, we have, even replacing $2\varepsilon$ by $\varepsilon$, the same inequality on $B(y_k,\lambda)$ for a $0<\lambda<1$ independent of $k$. By Lemma \ref{lem:Brownian motion and NT balls}, Brownian motion meets $\mathbb P_o^\theta$-almost surely infinitely many of the balls $B(y_k,\lambda)$. Let $\omega$ be a generic trajectory such that $(X_t(\omega))_t$ meets infinitely many of these balls, $\tau(\omega)=+\infty$ and $\lim_{t\to +\infty} u(X_t(\omega))=\ell$. There exists $t_0$ such that for all $t\geq t_0$, $|u(X_t(\omega))-\ell | \leq \varepsilon$. By compactness, $(X_t(\omega))_{t\geq t_0}$ meets at least one of the balls $B(y_k,\lambda)$, that is there exists $t_1\geq t_0$ such that $X_{t_1}(\omega)\in B(y_k,\lambda)$ for some $k$. Then
$$0<\varepsilon < |u(X_{t_1}(\omega))-\ell | \leq \varepsilon,$$
which yields a contradiction.
The theorem is proved.
\end{proof}

We end this section by proving Corollary \ref{cor:bnd from below}.
\begin{proof}[Proof of Corollary \ref{cor:bnd from below}]
Let $u$ be a harmonic function on $M$. We have to prove that $u$ converges non-tangentially at $\mu$-almost all points $\theta\in \partial M$ where it is non-tangentially bounded from below. Fix $c>c_0$ (where $c_0$ comes from Lemma \ref{lem:geometric lemma 1}) and for $m\in\N$, let
$$A_c^m:= \{ \theta\in \partial M \, | \, \forall x\in \Gamma_c^\theta, u(x)\geq -m \}.$$
It is sufficient to prove that for every $m\in \N$, $u$ converges non-tangentially at $\mu$-almost all $\theta\in A_c^m$. Let $m\in \N$ and $U:= \Gamma_c(A_c^m)$. The function $u+m$ is non-negative harmonic on $U$. By Theorem \ref{thm:local Fatou}, it converges non-tangentially at $\mu$-almost all points $\theta$ tangential for $U$ and so the same holds for the function $u$. By corollary \ref{cor:spikes lemma}, $\mu$-almost all $\theta\in A_c^m$ is tangential for $U$ and the proof is complete.
\end{proof}

%-----------------------------------
%        Proof of theorem         |
%-----------------------------------

\section{Density of energy}    \label{sect:density of energy}

In this section, we prove Theorem \ref{thm:density energy} and Corollary \ref{thm:energy}. 
Let us define, for $u$ harmonic on $M$, $\theta\in \partial M$ and $c>0$ the \textit{density of energy}
$$D_c^r(\theta):=-\frac 12 \int_{\Gamma_c^\theta} \Delta |u-r|(dx).$$
We refer to \cite{Bro88,Mou07} for introductions to the density of area integral and to the density of energy, respectively. Notice that by Sard's Theorem, for almost all $r\in \R$, $D_c^r(\theta)=\int_{\Gamma_c^\theta} | \nabla u(x)] \sigma_r(dx)$, where $\sigma_r$ is the hypersurface measure on $\{ u=r\}$. In addition, by the coarea formula, the \textit{non-tangential energy} equals
$$J_c^\theta:=\int_{\Gamma_c^\theta} | \nabla u |^2 d\nu_M=\int_{r\in \R} D_c^r(\theta) dr.$$

\begin{proof}[Proof of Theorem \ref{thm:density energy}]

In order to prove Theorem \ref{thm:density energy}, we have to prove that for all $c>0$:
\begin{description}
\item[Step 1] $u$ converges non-tangentially at $\mu$-almost all $\theta\in \partial M$ where $D_c^0(\theta)<+\infty$.
 \item[Step 2] $\sup_{r\in \R} D_c^r(\theta)<+\infty$ for $\mu$-almost all $\theta\in \partial M$ where $u$ converges non-tangentially;
\end{description} 

\textbf{Step 1:} the proof goes as in the main Theorem of \cite{Mou07}, proved in the framework of manifold of pinched negative curvature. Thus, we give only the main ideas of the proof.
The proof is based upon Theorem \ref{thm:local Fatou}, Lemma \ref{lem:geometric lemma 1} and Lemma \ref{lem:comparison Green Poisson}.

For $m\in \N$, denote
$$\mathcal D_c^m:= \{ \theta\in \partial M \, | \, D_c^0(\theta)\leq m\}$$
and $\Gamma:= \Gamma_c(\mathcal D_c^m)$.
It is sufficient to prove that for all $m\in \N$, $u$ converges non-tangentially at $\mu$-a.e. $\theta\in \mathcal D_c^m$. Fix $m\in \N$ and recall that $v_{\mathcal D_c^m}(x)=\mathbb P(X_\infty \in \mathcal D_c^m)$. First we prove, using Lemmas \ref{lem:geometric lemma 1} and \ref{lem:comparison Green Poisson} that there exists $\alpha\in (0,1)$ such that $\{v_{\mathcal D_c^m} \geq \alpha\}\subset \Gamma$ and
$$I:=-\int_{\{ v_{\mathcal D_c^m} \geq \alpha \}} G(o,x) \Delta |u|(dx)<+\infty.$$

Then we prove that for an increasing sequence of compact regular domains $V_n$ such that $\bigcup_n V_n=\{ v_{\mathcal D_c^m} \geq \alpha \}$,
$$\sup_n \mathbb E_o[ |u(X_{\tau_n})|] \leq |u(o)|+I,$$
where $\tau_n$ is the exit time of $V_n$.  This allows us to decompose $u$ as the difference of two non-negative harmonic functions on $\{ v_{\mathcal D_c^m} \geq \alpha \}$ (see \cite{Bro88}). Applying Theorem \ref{thm:local Fatou} to both functions, we get that $u$ converges non-tangentially at $\mu$-almost all tangential $\theta$ for $\{ v_{\mathcal D_c^m} \geq \alpha \}$. By Lemma \ref{lem:bounded harmonic functions}, $v_{\mathcal D_c^m}$ converges non-tangentially to 1 at $\mu$-almost all $\theta\in \mathcal D_c^m$. Such a $\theta$ is thus tangential for $\{ v_{\mathcal D_c^m} \geq \alpha \}$. Hence $u$ converges non-tangentially at $\mu$-almost all $\theta\in \mathcal D_c^m$ and the proof of Step 1 is complete.

\textbf{Step 2:} For $m\in \N$ and $c>0$, denote 
$$\majN_c^m:=\{ \theta\in \partial M \, | \, \sup_{x\in \Gamma_c^\theta} |u(x)|\leq m\}.$$
It is sufficient to show that for all $m\in \N$ and all $c>e>0$, $\sup_{r\in \R} D_e^r(\theta)<+\infty$ for $\mu$-a.e. $\theta\in \majN_c^m$. Fix $c>e>0$ and $m\in \N$. Let $\Gamma:=\Gamma_c(\majN_c^m)$ and let $\tau$ be the exit time of $\Gamma$. Let $\Gamma_n$ be an increasing sequence of bounded domains such that $\bigcup_n \Gamma_n=\Gamma$ and let $\tau_n$ be the exit time of $\Gamma_n$. The local martingale $(u(X_{t\wedge \tau}))$ is bounded by $m$ and thus by Barlow-Yor inequalities (\cite{BY81}), $\mathbb E_o\left[ \sup_{r\in \R} L_{\tau}^r \right] <+\infty$, where $L_t^r$ denotes the local time in $r$ of the local martingale $(u(X_t))$. Formula (\ref{eq:conditioning}) gives that for $\mu$-almost every $\theta\in \partial M$, $\mathbb E_o^\theta [\sup_{r\in\R} L_{\tau}^r]<+\infty$ and in particular, $\sup_n \mathbb E_o^\theta [\sup_{r\in\R} L_{\tau_n}^r]<+\infty$. 

We now use the following Lemma, whose proof works exactly as Proposition 2 in \cite{Bro88}. 

\begin{lemma}   \label{lem:local time}
Let $u$ be a harmonic function on $M$, $r\in \R$, and let $L_t^r$ denote the local time in $r$ of the local martingale $(u(X_t))$. Let also $U$ be a bounded domain in $M$ and $\tau$ be the exit time of $U$. We have
$$\mathbb E_o^\theta [L_\tau^r]=-\int_U G_U(o,x) K(x,\theta) \Delta |u-r|(dx).$$
\end{lemma}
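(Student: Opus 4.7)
The proof is the manifold version of Brossard's Proposition~2 in \cite{Bro88}; the ingredients he uses in the Euclidean setting are purely stochastic-calculus and transfer verbatim. The plan has three steps: Tanaka's formula, Doob's h-transform with integration by parts, and a Green-potential identification.

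First, since $u$ is harmonic, $u(X_t)$ is a local martingale under $\mathbb P_o$ by the martingale property, and Tanaka's formula applied to the convex function $y \mapsto |y - r|$ yields the decomposition
\[
|u(X_t) - r| = |u(o) - r| + N_t + L_t^r,
\]
with $N$ a local martingale under $\mathbb P_o$ and $L^r$ the local time at $r$ of the semimartingale $u(X_\cdot)$. Stopped at $\tau$ and using that $u$ and $K(\cdot,\theta)$ are continuous and hence bounded on the compact $\overline U$, the martingale $N_{t\wedge\tau}$ is bounded and $L_\tau^r$ is $\mathbb P_o$-integrable.

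Second, I transfer the computation to $\mathbb P_o$ via Doob's h-transform. Since $K(o,\theta)=1$ by the normalization of the Poisson kernel at $o$, and $K(X_{\cdot\wedge\tau},\theta)$ is a bounded $\mathbb P_o$-martingale, one has $d\mathbb P_o^\theta/d\mathbb P_o = K(X_\tau,\theta)$ on $\mathcal F_\tau$, and therefore $\mathbb E_o^\theta[L_\tau^r] = \mathbb E_o[L_\tau^r\, K(X_\tau,\theta)]$. Integration by parts applied to the product of the finite-variation process $L^r$ and the continuous martingale $K(X_\cdot,\theta)$ (their quadratic covariation vanishes) gives, after a standard localization justified by the $\mathbb P_o$-integrability of $L_\tau^r$ and boundedness of $K$ on $\overline U$,
\[
\mathbb E_o^\theta[L_\tau^r] = \mathbb E_o\!\left[\int_0^\tau K(X_s,\theta)\, dL_s^r\right].
\]

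Third, I identify this Brownian functional with the stated Green potential. By Tanaka's formula combined with the occupation-density formula, the random measure $dL_s^r$ on $[0,\tau]$ is the occupation integral of $X_s$ against the non-negative Radon measure $-\Delta|u-r|$ on $U$ (which, by the coarea formula and harmonicity of $u$, equals $2|\nabla u|\,\sigma_r$, the hypersurface measure on the level set $\{u=r\}$ weighted by $|\nabla u|$). The classical Brownian Green-function identity, applied in the distributional sense to the measure $\nu = -\Delta|u-r|$ with test integrand $\psi = K(\cdot,\theta)$, reads
\[
\mathbb E_o\!\left[\int_0^\tau \psi(X_s)\,dL_s^r\right] = \int_U G_U(o,x)\,\psi(x)\,\nu(dx) = -\int_U G_U(o,x)\,K(x,\theta)\,\Delta|u-r|(dx),
\]
which is the claimed formula.

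The main obstacle is this last identification, because $\Delta|u-r|$ is only a distributional Laplacian, supported on $\{u=r\}$ where $|u-r|$ fails to be smooth. The remedy, used verbatim in \cite{Bro88}, is to approximate $|y-r|$ by the smooth convex functions $f_\varepsilon(y)=\sqrt{(y-r)^2+\varepsilon^2}$, apply Ito's formula to $f_\varepsilon(u(X_\cdot))$ (using the simplification $\Delta(f_\varepsilon\circ u) = f_\varepsilon''(u)\,|\nabla u|^2$ that follows from harmonicity of $u$), use the smooth-case Green-function identity to evaluate the resulting expectation, and pass to the limit $\varepsilon\to 0$ with standard dominated-convergence arguments. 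None of these steps uses Euclidean structure; they rest on stochastic calculus, the Dynkin formula, and the Brownian Green-function representation, all of which are available on any Riemannian manifold satisfying~($\clubsuit$).
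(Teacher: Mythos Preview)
Your proposal is correct and follows precisely the route the paper indicates: the paper gives no proof of this lemma beyond the remark that it ``works exactly as Proposition~2 in \cite{Bro88}'', and you have supplied exactly that argument (Tanaka's formula, the $h$-transform identity $\mathbb E_o^\theta[L_\tau^r]=\mathbb E_o[L_\tau^r K(X_\tau,\theta)]$, integration by parts against the martingale $K(X_\cdot,\theta)$, and the Green-potential identification via smooth approximation $f_\varepsilon$). There is nothing to add.
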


By Lemma \ref{lem:local time}, for $\mu$-almost every $\theta\in \partial M$, 
$$\sup_n \sup_{r\in R} -\int_{\Gamma_n} G_{\Gamma_n}(o,x) K(x,\theta) \Delta |u-r|(dx)<+\infty.$$ Since for every $n\in \N$, $G_{\Gamma_n}(o,x) \ind{\Gamma_n}(x)\leq G_{\Gamma_{n+1}}(o,x) \ind{\Gamma_{n+1}}(x)$, by the monotone convergence theorem, we have for $\mu$-almost all $\theta\in\partial M$,
\begin{equation}   \label{eq:local time expectation}
\sup_{r\in \R}\ -\int_{\Gamma} G_{\Gamma}(o,x) K(x,\theta) \Delta |u-r|(dx)<+\infty.
\end{equation}

On the other hand, by Lemma \ref{lem:proba tau infinite}, for $\mu$-almost every $\theta\in \majN_c^m$, $\mathbb P_o^\theta(\tau=+\infty)>0$. Hence by Lemmas \ref{lem:comparison Green Poisson} and \ref{lem:comparison Green functions}, for $\mu$-almost every $\theta\in \majN_c^\theta$, there exist $R>0$ and $C>0$ such that 
\begin{equation}
\forall x\in \Gamma_e^\theta \setminus B(o,R), \, G_\Gamma(o,x)K(x,\theta)\geq C.
\end{equation}
Combining with (\ref{eq:local time expectation}), we obtain that for $\mu$-almost every $\theta\in \majN_c^m$,
$$2 \sup_{r\in \R} D_e^r(\theta)=\sup_{r\in\R}\ -\int_{\Gamma_e^\theta} \Delta |u-r|(dx)<+\infty$$
and Theorem \ref{thm:density energy} is proved.
\end{proof}

\begin{proof}[Proof of Corollary \ref{thm:energy}]
Recall that the non-tangential energy at $\theta\in \partial M$ is
$$J_c^\theta =\int_{r\in\R} D_c^r(\theta) dr.$$
Note that if $u$ converges non-tangentially at $\theta\in \partial M$, $D_c^r(\theta)=0$ for $|r|$ large enough and therefore Theorem \ref{thm:density energy} implies that $u$ has finite non-tangential energy at $\mu$-almost all $\theta\in\partial M$ where $u$ converges non-tangentially. 

For $m\in \N$ and $c>0$, denote
$$\majJ_c^m:= \left\{ \theta\in\partial M \, | \, \int_{\Gamma_c^\theta} |\nabla u|^2 d\nu_M \leq m\right\}.$$
It is sufficient to prove that for all $m\in\N$, $u$ converges non-tangentially at $\mu$-almost all $\theta\in \majJ_c^m$. We have
$$\int_\R \int_{\majJ_c^m} D_c^r(\theta) d\mu_o(\theta) dr= \int_{\majJ_c^m} J_c^\theta d\mu_o(\theta) \leq m.$$
Then for almost every $r\in \R$, we have $\int_{\majJ_c^m} D_c^r(\theta) d\mu_o(\theta)<+\infty$. For such a real $r\in \R$, we have thus that for $\mu$-almost all $\theta\in \majJ_c^m$, $D_c^r(\theta)<+\infty$ and by Theorem \ref{thm:density energy}, for $\mu$-almost all $\theta\in\majJ_c^m$, $u$ converges non-tangentially at $\theta$. 

\end{proof}

\bibliography{biblio}
\bibliographystyle{alpha}

\end{document}